\newtheorem{thm}{Theorem}[section]
\newtheorem{corollary}[thm]{Corollary}
\newtheorem{lemma}[thm]{Lemma}
\newtheorem{proposition}[thm]{Proposition}
\theoremstyle{definition}
\newtheorem{defin}[thm]{Definition}
\newtheorem{remark}[thm]{Remark}
\numberwithin{equation}{section}
\newcommand{\mfrak}[1]{\mathfrak{#1}}
\newcommand{\mbb}[1]{\mathbb{#1}}
\newcommand{\mcal}[1]{\mathcal{#1}}
\newcommand{\bigslant}[2]{\left.\raisebox{.2em}{$#1$}\middle/\raisebox{-.2em}{$#2$}\right.}
\newcommand{\dinf}{d}
\newcommand{\resum}{\sideset{}{'} \sum}
\DeclareMathOperator{\Gal}{Gal}
\DeclareMathOperator{\Syl}{Syl}
\DeclareMathOperator{\Hom}{Hom}
\DeclareMathOperator{\rank}{rank}
\DeclareMathOperator{\GL}{GL}
\DeclareMathOperator{\cores}{cores}
\DeclareMathOperator{\ord}{ord}
\DeclareMathOperator{\Reg}{Reg}
\DeclareMathOperator{\id}{id}
\DeclareMathOperator{\im}{im}
\newcommand{\Cl}{\mathfrak{C}}
\begin{document}




\title[Ray Class Groups]{Gauss Sums, Stickelberger's Theorem and the Gras Conjecture for Ray Class Groups}

\author[T. All]{Timothy All}
\address{Department of Mathematics\\ Rose-Hulman Institute of Technology\\ 5500  Wabash Ave \\ Terre Haute, IN, U.S.A}
\email{timothy.all@rose-hulman.edu}


\date{}

\begin{abstract}
Let $k$ be a real abelian number field and $p$ an odd prime not dividing $[k:\mathbb{Q}]$. For a natural number $d$, let $E_d$ denote the group of units of $k$ congruent to $1$ modulo $d$, $C_d$ the subgroup of $d$-circular units of $E_d$, and $\mfrak{C}(d)$ the ray class group of modulus $d$. Let $\rho$ be an irreducible character of $G=\Gal(k/\mathbb{Q})$ over $\mathbb{Q}_p$ and $e_{\rho} \in \mathbb{Z}_p[G]$ the corresponding idempotent. We show that if the ramification index of $p$ in $k$ is less than $p-1$, then $|e_{\rho} \Syl_p(E_d/C_d) | = |e_{\rho} \Syl_p(\mfrak{C}_d)|$ where $\mfrak{C}_d$ is the part of $\mfrak{C}(d)$ where $G$ acts non-trivially. This is a ray class version of the Gras Conjecture. In the case when $p \mid [k:\mathbb{Q}]$, similar but slightly less precise results are obtained. In particular, beginning with what could be considered a Gauss sum for real fields, we construct explicit Galois annihilators of $\Syl_p(\mfrak{C}_{\mfrak{a}})$ akin to the classical Stickelberger Theorem.
\end{abstract}

\subjclass[2010]{11R80}

\keywords{ray class group, Stickelberger, circular units, Gauss sum, real abelian number field}

\maketitle

\section{Introduction}
Let $k$ denote a real abelian number field with Galois group $G$. Let $\mfrak{o}_k=\mfrak{o}$ denote the ring of integers of $k$. Fix $d \in \mbb{N}$ and let $E_{\dinf}$ denote the units of $\mfrak{o}$ that are congruent to $1$ modulo $d$. Let $\overline{d}$ denote the product of distinct prime divisors of $d$, and for every $n \in \mbb{N}$ we let $\zeta_n$ stand for a primitive $n$-th root of unity. We assume the $\zeta_n$ have been chosen so that for every $t \mid n$ we have $\zeta_n^t = \zeta_{n/t}$. For every $n \in \mathbb{N}$, let $k^n = \mathbb{Q}(\zeta_n) \cap k$, and for $n >1$ satisfying $n \nmid \overline{d}$, let
\[ \delta_{n,d}:= N^{\mbb{Q}(\zeta_n)}_{k^n} \prod_{t \mid \overline{d}} (1-\zeta_n^t)^{\mu(t)d/t} \in k^{\times} \]
where $\mu(t)$ denotes the M\"obius function. Let $D(\dinf)$ denote the $G$-module generated by the $\delta_{n,d}$ for all $n \nmid \overline{d}$ in $k^{\times}$. We let
\begin{align*}
C(\dinf)&= E \cap D(\dinf) \\
C_{\dinf} &= E_{\dinf} \cap D(\dinf).
\end{align*}
We call the modules $D(\dinf)$, $C(\dinf)$, and $C_{\dinf}$ the $d$-cyclotomic numbers, units, and units congruent to $1$ modulo $d$, respectively. We write $D$ for $D(1)$ and $C$ for $C(1)$. Note that $C(1)=C_1$. These modules were originally introduced by Sinnott \cite{Sinnott} (for $d=1$) and by Schmidt \cite{Schmidt} (for $d>1$). Note that $C$ is not quite the full Sinnott group of cyclotomic units but rather its subgroup of totally positive units.

For an ideal $\mfrak{a} \subseteq \mfrak{o}$, let $\Cl(\mfrak{a})$ denote the ray class group of $k$ of modulus $\mfrak{a}$, and let $H(\mfrak{a})$ denote the corresponding ray class field of $k$ so that $\Gal(H(\mfrak{a})/k) \simeq \Cl(\mfrak{a})$ via the Artin map. Let $H_{\mfrak{a}} = H(\mfrak{a}) \cap \mbb{Q}^{ab}$, the maximal sub-extension of $H(\mfrak{a})/k$ abelian over $\mbb{Q}$, and let $\Cl_{\mfrak{a}} \leq \Cl(\mfrak{a})$ such that $\Cl_{\mfrak{a}} \simeq \Gal(H(\mfrak{a})/H_{\mfrak{a}})$. In the case when $p \nmid [k:\mbb{Q}]$, note that
\[ \Syl_p(\Cl_{\mfrak{a}}) = (1-e_1) \Syl_p\big( \Cl(\mfrak{a}) \big),\]
where $e_1 \in \mbb{Z}_p[G]$ is the idempotent associate to the trivial character.

There's a fascinating interplay between units structures and ideal structures in algebraic number theory. For example, the following theorem was proven by Sinnott \cite[Theorem 4.1]{Sinnott} for $d=1$, and by Schmidt \cite[Satz 3]{Schmidt} for $d>1$ using similar methods.
\begin{thm} \label{index}
If $p \nmid 2 \cdot |G|$, then $ \left| \Syl_p(E_{\dinf}/C_{\dinf}) \right| = \left| \Syl_p(\Cl_{\dinf}) \right|$.
\end{thm}
One of the aims of this article is to prove a Galois-equivariant version of the theorem above. To be precise, let $[\chi]$ be the $\Gal(\mbb{Q}_p(\zeta_{|G|})/\mbb{Q}_p)$-orbit of a non-trivial character $\chi$ of $G$ and define $\rho= \sum_{\psi \in [\chi]} \psi$. Let $\epsilon_{\rho}$ be the $\mbb{Z}_p$-valued idempotent
\[ \epsilon_{\rho} = \frac{1}{|G|} \sum_{\sigma \in G} \rho(\sigma) \sigma^{-1} \in \mbb{Z}_p[G].\]
For a $\mbb{Z}_p[G]$-module $M$, we let $M_{\rho}$ denote the sub-module $\epsilon_{\rho} M$.

Let $e(p)$ denote the ramification index of $p$ in $k$. One of our main results is the following
\begin{thm}\label{main}
If $p \nmid 2 \cdot |G|$ and $e(p) < p-1$, then
\[ \left| \Syl_p(E_d/C_d)_{\rho} \right| = \left| \Syl_p(\Cl_d)_{\rho} \right|.\]
\end{thm}
\Cref{main} is a ray class version of the Gras Conjecture \cite{Gras}, the statement of the claim when $d=1$. Greenberg \cite{Greenberg} observed that the Gras Conjecture followed from the Main Conjecture of Iwasawa Theory which was later on proven by Mazur and Wiles \cite{MazurWiles}.

In the case when $p$ possibly divides the order of $G$, we prove a result akin to Rubin's \cite[Theorem 1.3]{Rubin} which itself was a generalization of a theorem of Thaine \cite[Theorem 3]{Thaine}. Our method of proof follows along those same lines. In particular, we define a subgroup $\mathcal{S}(\mfrak{a})$ of $E$ which we call the \emph{$\mfrak{a}$-special units}. These are akin to Rubin's special units \cite{Rubin}, and we show that the $d$-cyclotomic units of Schmidt are a special instance of $d$-special units. We then show
\begin{thm} \label{rayrubin}
Let $\alpha: E \to \mcal{O}[G]$ be any $G$-module map where $\mcal{O}$ is the valuation ring of any finite extension of $\mbb{Q}_p$, and let $\varsigma_{\mfrak{a}} \in \mcal{S}(\mfrak{a})$. Then $\alpha(\varsigma_{\mfrak{a}})$ annihilates $\Cl_{\mfrak{a}} \otimes_{\mbb{Z}} \mcal{O}$.
\end{thm}

As stated, our method of proof originates in the work of Thaine and Rubin. Thaine noticed that cyclotomic units could be used to generate elements $\alpha$ that act like real analogues of Gauss sums much like roots of unity are used to generate classical Gauss sums. To generate $\alpha$, Thaine relied on an invocation of Hilbert's Theorem 90. A key feature here is that we give $\alpha$ explicitly. This affords finer control over the ideal relations revealed by the factorization of $\alpha$ thus paving the way towards annihilation results concerning ray classes. In particular we prove the following ray class version of a conjecture of D. Solomon \cite[Conjecture 4.1]{Solomon} which acts as a sort of Stickelberger Theorem for ray class groups.

\begin{thm} \label{raysolomon}
Let $\mcal{O}$ denote the valuation ring of a $p$-adic completion of $k$, and let $\varpi \in \mcal{O}$ be a local parameter. For every $\varsigma_{\mfrak{a}} \in \mcal{S}(\mfrak{a})$ we have that
 \[ \frac{|e(p)|_p^{-1}}{\varpi^{|e(p)|_p^{-1}}} \sum_{\sigma \in G} \log_p\big( \varsigma_{\mfrak{a}}^{\sigma} \big) \sigma^{-1} \in \mcal{O}[G] \]
annihilates $\Cl_{\mfrak{a}} \otimes_{\mbb{Z}} \mcal{O}$.
\end{thm}

\section{Preliminaries}

In this section, we collect some results that will be useful in the sequel concerning the structure of relevant $G$-modules contained in $k$. Until further notice, we consider $p$ to be an odd prime not dividing $[k:\mbb{Q}]$. Let
\begin{align*}
K &= \text{a local field containing the character values of $G$}\\
\mcal{O} &= \text{the valuation integers of $K$} \\
\mbb{F} &= \text{the residue field of $\mcal{O}$.} \\
\mbb{F}_p &=\text{the finite field with $p$-elements.}
\end{align*}
We normalize the $p$-adic absolute value in the usual way; $|p|_p = p^{-1}$. For any finite set $X$, we use $|X|$ to denote the number of elements in $X$. For $H \subseteq G$, we write $s(H)$ to denote the sum
\[ s(H) = \sum_{\sigma \in H} \sigma \in \mathbb{Z}[G].\]
We write $\widehat{G}$ for $\Hom_{\mbb{Z}}(G, \mcal{O}^{\times})$. For every $\chi \in \widehat{G}$, we let
\[ e_{\chi} = \frac{1}{|G|} \sum_{\sigma \in G} \chi(\sigma) \sigma^{-1} \in \mcal{O}[G], \]
the idempotent associate to $\chi$. We may naturally view $e_{\chi} \in \mbb{F}[G]$.

Throughout we use $\otimes$ as an abbreviation for $\otimes_{\mbb{Z}}$. For a $\mbb{Z}[G]$-module $M$ and commutative ring $R$, we make $M\otimes R$ into an $R[G]$-module in the obvious way. The following proposition generalizes \cite[Theorem 3.3]{All2}. It will be useful for demonstrating certain $G$-modules are cyclic later on.

\begin{proposition} \label{cyclicmodules}
Let $M$ be a free $\mbb{Z}$-submodule of either $(k,+)$ or $(k^{\times},\cdot)$ of finite rank such that $\sigma(M) =M$ for all $\sigma \in G$. For $H \leq G$, let $M^H$ denote the collection of all elements of $M$ fixed by $H$. For every $H \leq G$, suppose that the inclusion $M^H \subseteq M$ induces
\[ M^H \otimes \mbb{F}_p \hookrightarrow M \otimes \mbb{F}_p.\]
If there exists $m \in M$ such that $[M:\langle m \rangle_{\mbb{Z}[G]}]< \infty$, then $M \otimes \mbb{F}_p$ is a cyclic $\mbb{F}_p[G]$-module.
\end{proposition}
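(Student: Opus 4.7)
My plan is to pass to $\chi$-isotypic components via the idempotents $e_\chi \in \mcal{O}[G]$ (available because $p \nmid |G|$) and show that each such component of $M \otimes \mbb{F}$ has dimension at most one over $\mbb{F}$.

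The existence of $m \in M$ with $[M : \mbb{Z}[G] m] < \infty$ forces $M \otimes \mbb{Q} = \mbb{Q}[G] \cdot m$, so $M \otimes \mcal{K}$ is cyclic over $\mcal{K}[G] = \prod_{\chi \in \widehat{G}} \mcal{K} e_\chi$. Cyclicity in this product of fields is equivalent to $\dim_{\mcal{K}} e_\chi(M \otimes \mcal{K}) \leq 1$ for every $\chi$.

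Since $M$ is $\mbb{Z}$-free of finite rank, $M \otimes \mcal{O}$ is $\mcal{O}$-free, and $e_\chi(M \otimes \mcal{O})$ is an $\mcal{O}$-direct summand (hence free over the DVR $\mcal{O}$) of rank $\dim_{\mcal{K}} e_\chi(M \otimes \mcal{K}) \leq 1$. Reducing mod $p$ preserves this rank: $\dim_{\mbb{F}} e_\chi(M \otimes \mbb{F}) \leq 1$ for every $\chi$. Hence $M \otimes \mbb{F}$ is cyclic as an $\mbb{F}[G]$-module, a generator being obtained by summing one nonzero vector from each nonzero component $e_\chi(M \otimes \mbb{F})$.

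To descend from $\mbb{F}[G]$-cyclicity to $\mbb{F}_p[G]$-cyclicity, I would use the Wedderburn decomposition $\mbb{F}_p[G] = \prod_O R_O$ over $\Gal(\mbb{F}/\mbb{F}_p)$-orbits $O \subseteq \widehat{G}$: the $R_O$-dimension of the $O$-component of $M \otimes \mbb{F}_p$ equals $\dim_{\mbb{F}} e_\chi(M \otimes \mbb{F})$ for any $\chi \in O$, still at most $1$. The underlying argument is mostly formal; I expect the only point to nail down carefully is that $e_\chi(M \otimes \mcal{O})$ is a genuine $\mcal{O}$-direct-summand, which is where the $\mbb{Z}$-freeness of $M$ is essential. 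The subfield hypothesis $M_F \otimes \mbb{F}_p \hookrightarrow M \otimes \mbb{F}_p$ does not enter this idempotent argument explicitly, but is presumably what allows the proposition to be applied to the cyclotomic-style modules considered later in the paper.
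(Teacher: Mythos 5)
Your argument is correct, and it is genuinely different from the paper's. The paper first treats the case of cyclic $k$ by comparing the characteristic and minimal polynomials of a generator of $G$ acting on a $\mbb{Z}$-basis of $M$ (using $p \nmid n$ to make $x^n-1$ separable mod $p$), then handles general abelian $G$ by passing to the cyclic subfields $F = k^{\ker\chi}$: this is exactly where the hypothesis $M_F \otimes \mbb{F}_p \hookrightarrow M \otimes \mbb{F}_p$ and the surjectivity of the norm map $N^k_F$ are invoked to identify $e_{\chi}(M\otimes\mbb{F})$ with $e_{\chi}^*(M_F\otimes\mbb{F})$, and the final descent from $\mbb{F}[G]$ to $\mbb{F}_p[G]$ is done via uniqueness of decompositions into indecomposables (Krull--Schmidt). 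You instead work integrally with the orthogonal idempotents $e_{\chi}\in\mcal{O}[G]$, so that $M\otimes\mcal{O}=\bigoplus_{\chi}e_{\chi}(M\otimes\mcal{O})$ with each summand $\mcal{O}$-free of rank equal to $\dim_{\mcal{K}}e_{\chi}(M\otimes\mcal{K})\le 1$ (generic cyclicity from the finite-index generator), and you descend to $\mbb{F}_p$ through the Wedderburn decomposition of $\mbb{F}_p[G]$ into fields indexed by Frobenius orbits of characters; for that last step the crude bound $\dim_{\mbb{F}_p}e_O(M\otimes\mbb{F}_p)=\sum_{\chi\in O}\dim_{\mbb{F}}e_{\chi}(M\otimes\mbb{F})\le |O|=[R_O:\mbb{F}_p]$ already suffices, so the Galois-equality you assert needs no extra care. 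Your route is shorter, avoids both the polynomial computation and Krull--Schmidt, and in fact shows that the subfield-injectivity hypothesis is not needed for the cyclicity conclusion itself once one assumes (as this section of the paper does throughout, and as you correctly flag) that $p\nmid |G|$; in the paper that hypothesis is structural to the reduction to cyclic subfields, and it is what the paper's route uses to pin down the $\chi$-components, but your direct summand argument delivers the same bound $\dim_{\mbb{F}}e_{\chi}(M\otimes\mbb{F})\le 1$ without it.
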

\begin{proof}
Suppose $k/\mbb{Q}$ is cyclic with $\sigma$ generating $G$. Let $r = \rank_{\mbb{Z}} M$ and let $\varrho: G \to \GL(r,\mbb{Z})$ be the representation induced by the action of $G$ on a fixed $\mbb{Z}$-basis, say $\{ m_1,m_2,\ldots, m_r\}$, for $M$. Let $m_{\varrho(\sigma)}$ and $h_{\varrho(\sigma)}$ be the minimal and characteristic polynomials for $\varrho(\sigma)$, respectively. Suppose $m \in M$ such that the index $[M: \langle m \rangle_{\mbb{Z}[G]}]$ is finite. From this it follows that $r= \rank_{\mbb{Z}} \langle m \rangle_{\mbb{Z}[G]}$ and
\[  h_{\varrho(\sigma)}(x) =m_{\varrho(\sigma)}(x) \mid x^{|G|}-1.\]
Now, let $\overline{\varrho} : G \to \GL(r,\mbb{F}_p)$ be the representation induced by the action of $G$ on the $\mbb{F}_p$-basis $\{ m_1 \bmod{pM}, \ldots, m_r \bmod{pM}\}$. Note that
\[ h_{\overline{\varrho}(\sigma)} \equiv h_{\varrho(\sigma)} \bmod{p}.\]
Since $p \nmid |G|$, it follows that $h_{\overline{\varrho}(\sigma)}$ factors into a product of distinct irreducibles modulo $p$. So $M/pM \simeq M \otimes \mbb{F}_p$ is a cyclic $\mbb{F}_p[G]$-module.

Now suppose $k$ is merely abelian over $\mbb{Q}$, and let $\chi \in \widehat{G}$. Then $\chi$ naturally determines a character of $G'=G/\ker \chi$, the Galois group of the cyclic extension $k'/\mathbb{Q}$ where $k'=k^{\ker \chi}$. We write $M'$ for $M^{\ker \chi}$, and let $e_{\chi}'$ denote the idempotent associate to $\chi \in \Hom_{\mbb{Z}}(G', \mcal{O}^{\times})$, i.e.,
\[ e_{\chi}' = \frac{1}{|G'|} \sum_{\sigma \in G'} \chi(\sigma) \sigma^{-1}.\]
Note that $e_{\chi} = |\ker \chi|^{-1} \cdot \cores_{k'}^k e_{\chi}'$ where $\cores_{k'}^k$ denotes co-restriction from $k'$ to $k$. We may naturally view $e_{\chi}$ and $e_{\chi}'$ as being $\mbb{F}$-valued, in particular, we may view $\widehat{G}$ as a $\Gal(\mbb{F}/\mbb{F}_p)$-module. Let $[\chi]=\{ \chi^{\tau}: \tau \in \Gal(\mbb{F}/\mbb{F}_p) \}$, and let $e_{[\chi]},e_{[\chi]}' \in \mbb{F}_p[G]$ be defined by
\[ e_{[\chi]} = \sum_{\psi \in [\chi]} e_{\psi} \qquad \text{and} \qquad e_{[\chi]}'=\sum_{\psi \in [\chi]} e_{\psi}'.\]
It follows that $e_{[\chi]} = |\ker \chi|^{-1} \cdot \cores_{k'}^k e_{[\chi]}'$.

By assumption, we have $M' \subseteq M$ induces $M' \otimes \mbb{F}_p \hookrightarrow M \otimes \mbb{F}_p$. So we may view $M' \otimes \mbb{F}_p \subseteq M \otimes \mbb{F}_p$. For $m \in M$, we have
\[ e_{[\chi]} (m \otimes 1) = \cores_{k'}^k e_{[\chi]}' \big( m \otimes (|\ker \chi|^{-1}) \big) = e_{[\chi]}' \big( m^{s(\ker \chi)} \otimes (|\ker \chi|^{-1}) \big).\]
Since $p \nmid n$, the map $M\otimes\mbb{F}_p \to M' \otimes \mbb{F}_p$ defined by $x \mapsto x^{s(\ker \chi)}$ is surjective. It follows that
\begin{equation} \label{equality} e_{[\chi]}(M \otimes \mbb{F}_p) = e_{[\chi]}' (M' \otimes \mbb{F}_p) \subseteq M \otimes \mbb{F}_p.\end{equation}
Now, viewing $M'$ as a $G'$-module, we note that $k'/\mbb{Q}$ is cyclic and $M'$ satisfies all the hypotheses of the proposition; $M'$ is a free $\mbb{Z}$-module of finite rank that is preserved under the action of $G'$, and for all $H' \subseteq G'$ we have $(M')^{H'} \subseteq M'$ induces $(M')^{H'} \otimes \mbb{F}_p \hookrightarrow M' \otimes \mbb{F}_p$ (otherwise there exists $H \leq G$ such that $M^H \otimes \mbb{F}_p \not \hookrightarrow M \otimes \mbb{F}_p$ contrary to assumption). So $M' \otimes \mbb{F}_p$ is a cyclic $\mbb{F}_p[G']$ module, whence $M' \otimes \mbb{F}_p$ is a cyclic $\mbb{F}_p[G]$-module.

Let $\mfrak{m}' \in M'\otimes \mbb{F}_p$ such that $\mfrak{m}'$ generates $M' \otimes \mbb{F}_p$ as an $\mbb{F}_p[G]$-module, and let $\mfrak{m}_{[\chi]}= e_{[\chi]} \mfrak{m}' \in M \otimes \mbb{F}_p$. Using \Cref{equality}, we get
\[ e_{[\chi]} (M \otimes \mbb{F}_p) = e_{[\chi]}'(M' \otimes \mbb{F}_p) = \langle \mfrak{m}_{[\chi]} \rangle_{\mbb{F}_p[G]}.\]
Let $\mfrak{m} \in M \otimes \mbb{F}$ be defined by $\mfrak{m} = \sum \mfrak{m}_{[\chi]}$
where the sum is taken over $X$, a complete system of representatives of $\widehat{G}/\Gal(\mbb{F}/\mbb{F}_p)$. Since $e_{[\chi]} \mfrak{m} = \mfrak{m}_{[\chi]}$, we have
\begin{align*}
M \otimes \mbb{F}_p &= \bigoplus_{\chi \in X} e_{[\chi]} (M \otimes \mbb{F}_p) \\
&= \bigoplus_{\chi \in X} \langle \mfrak{m}_{[\chi]} \rangle_{\mbb{F}_p[G]} \\
&= \langle \mfrak{m} \rangle_{\mbb{F}_p[G]}.
\end{align*}
This completes the proof of the proposition.
\end{proof}

\begin{lemma} \label{cycliclemma}
Let $M$ be a $\mbb{Z}[G]$-module. The following are equivalent:
\begin{enumerate}[\upshape (i)]
\item $M \otimes \mbb{Z}_p$ is a cyclic $\mbb{Z}_p[G]$-module.
\item $M \otimes \mbb{F}_p$ is a cyclic $\mbb{F}_p[G]$-module,
\end{enumerate}
\end{lemma}
\begin{proof}
For $\mfrak{m} \in M\otimes \mbb{Z}_p$, we have the exact sequence
\[ \langle \mfrak{m} \rangle_{\mbb{Z}_p[G]} \otimes \mbb{F}_p \to (M \otimes \mbb{Z}_p) \otimes \mbb{F}_p \simeq M \otimes \mbb{F}_p \to \left( \bigslant{M\otimes \mbb{Z}_p}{\langle \mfrak{m} \rangle_{\mbb{Z}_p[G]}} \right) \otimes \mbb{F}_p \to 0.\]
If $\langle \mfrak{m} \rangle_{\mbb{Z}_p[G]} = M \otimes \mbb{Z}_p$, then the third term in this exact sequence is zero. Thus the first map must be onto, so $\overline{\mfrak{m}}$, the image of $\mfrak{m}$ through the first map, generates $M \otimes \mbb{F}_p$. So (i) implies (ii).

Conversely, suppose $\overline{\mfrak{m}} \in M \otimes \mbb{F}_p$ such that $\langle \overline{\mfrak{m}} \rangle_{\mbb{F}_p[G]} = M \otimes \mbb{F}_p$. Let $\mfrak{m} \in M \otimes \mbb{Z}_p$ reduce to $\overline{\mfrak{m}}$ in $M \otimes \mbb{F}_p$. Then the first map in the exact sequence above is onto, hence the third term in that sequence is zero. It follows that $\langle \mfrak{m} \rangle_{\mbb{Z}_p[G]} = M \otimes \mbb{Z}_p$. So (ii) implies (i).
\end{proof}

\begin{corollary} \label{specificcyclic}
The modules $E \otimes \mbb{Z}_p$ and $\mfrak{o} \otimes \mbb{Z}_p$ are cyclic $\mbb{Z}_p[G]$-modules. In fact
\[ E \otimes \mbb{Z}_p \simeq \bigslant{\mbb{Z}_p[G]}{\big( s(G) \big)} \quad \text{and} \quad \mfrak{o} \otimes \mbb{Z}_p \simeq \mbb{Z}_p[G].\]
\end{corollary}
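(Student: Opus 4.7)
The plan is to derive the corollary by applying Proposition~\ref{cyclicmodules} to $M=\mfrak{o}$ and to the torsion-free quotient $M=E/\{\pm 1\}$, then upgrade the conclusion from $\mbb{F}_p[G]$-cyclicity to $\mbb{Z}_p[G]$-cyclicity via Lemma~\ref{cycliclemma} (applied with $\mcal{O}=\mbb{Z}_p$ and $\mbb{F}=\mbb{F}_p$), and finally pin down the isomorphism class by a $\mbb{Z}_p$-rank count. Since $p$ is odd, the torsion subgroup $\{\pm 1\}\leq E$ dies after tensoring with $\mbb{Z}_p$, so $E\otimes\mbb{Z}_p = (E/\{\pm 1\})\otimes\mbb{Z}_p$ and nothing is lost in passing to the free quotient.

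To verify the injectivity hypothesis of Proposition~\ref{cyclicmodules}, I would use standard trace/norm retractions based on $p\nmid[k:F]$. For $\mfrak{o}$, if $x\in\mfrak{o}_F\cap p\mfrak{o}$, applying $\tr^k_F$ gives $[k:F]\,x\in p\mfrak{o}_F$, so $x\in p\mfrak{o}_F$. For $E/\{\pm 1\}$, if $u\in E_F$ and $u\equiv v^p\pmod{\pm 1}$ with $v\in E$, then $u^{[k:F]}\equiv N^k_F(v)^p\pmod{\pm 1}$; a Bezout relation $a[k:F]+bp=1$ rewrites $u\equiv\bigl(N^k_F(v)^a u^b\bigr)^p\pmod{\pm 1}$ with the base in $E_F$.

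The finite-index $\mbb{Z}[G]$-cyclic submodule required by Proposition~\ref{cyclicmodules} is classical in both cases. For $\mfrak{o}$ it is the normal basis theorem over $\mbb{Q}$: a normal basis element $\theta\in\mfrak{o}$ yields $\{\sigma\theta\}_{\sigma\in G}$ as a $\mbb{Q}$-basis of $k$, so $[\mfrak{o}:\langle\theta\rangle_{\mbb{Z}[G]}]<\infty$. For $E$, the logarithmic embedding realizes $E\otimes\mbb{R}$ as the sum-zero hyperplane in the regular representation $\mbb{R}[G]$, giving $E\otimes\mbb{Q}\simeq\mbb{Q}[G]/s(G)$ as $\mbb{Q}[G]$-modules, and in particular cyclic; clearing denominators in a $\mbb{Q}[G]$-generator produces $\eta\in E$ with $[E/\{\pm 1\}:\langle\eta\rangle_{\mbb{Z}[G]}]<\infty$.

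Combining Proposition~\ref{cyclicmodules} with Lemma~\ref{cycliclemma} now gives that $\mfrak{o}\otimes\mbb{Z}_p$ and $E\otimes\mbb{Z}_p$ are cyclic $\mbb{Z}_p[G]$-modules. A generator of the former yields a surjection $\mbb{Z}_p[G]\twoheadrightarrow\mfrak{o}\otimes\mbb{Z}_p$ between free $\mbb{Z}_p$-modules of common rank $n$, hence an isomorphism. For $E\otimes\mbb{Z}_p$, the element $s(G)$ acts on any unit $u$ as $N^k_{\mbb{Q}}(u)=\pm 1$, which vanishes modulo odd $p$, so the surjection $\mbb{Z}_p[G]\twoheadrightarrow E\otimes\mbb{Z}_p$ factors through $\mbb{Z}_p[G]/s(G)$. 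Since $p\nmid|G|$, one may split off the trivial idempotent $e_1=s(G)/|G|$ to see that $\mbb{Z}_p[G]/s(G)\simeq\mbb{Z}_p[G](1-e_1)$ is $\mbb{Z}_p$-free of rank $n-1$, matching the $\mbb{Z}_p$-rank of $E\otimes\mbb{Z}_p$, so the factored map is an isomorphism. The only subtle point I anticipate is the careful handling of the $\pm 1$ torsion during the Bezout step for units; the remainder reduces to standard facts.
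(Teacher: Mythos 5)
Your proposal is correct and follows essentially the same route as the paper: verify the injectivity and finite-index-cyclic hypotheses so that Proposition~\ref{cyclicmodules} applies to $\mfrak{o}$ and to the units, upgrade to $\mbb{Z}_p[G]$-cyclicity via Lemma~\ref{cycliclemma}, and identify the isomorphism type by a rank count. You simply make explicit several points the paper leaves implicit (the norm/Bezout verification of injectivity, the normal basis and Minkowski-unit inputs, the $\pm 1$ torsion, and the final identification with $\mbb{Z}_p[G]$ and $\mbb{Z}_p[G]/s(G)$), all of which check out.
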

\begin{proof}
Let $H \leq G$, then $E^H$ is the set of units of $k^H$ and $\mfrak{o}^H$ is the ring of integers of $k^H$. Since $k$ is real and Galois, it follows that
\[ E^H \otimes \mbb{F}_p  \hookrightarrow E \otimes \mbb{F}_p.\]
Similarly, we have that
\[ \mfrak{o}^H \otimes \mbb{F}_p = \mfrak{o}^H/p \hookrightarrow \mfrak{o}/p= \mfrak{o} \otimes \mbb{F}_p.\]
The cyclicality of $E \otimes \mbb{Z}_p$ and $\mfrak{o} \otimes \mbb{Z}_p$ now follows \Cref{cyclicmodules} and \Cref{cycliclemma}. The particular isomorphisms given in the claim are now straightforward to prove.
\end{proof}

For a prime $\ell$, we adopt the following notation to be used throughout.
\begin{align*}
\sigma_{\ell}&= \text{a Frobenius automorphism in $G$ for $\ell$,} \\
I_{\ell}&= \text{the inertia subgroup in $G$ of $\ell$,} \\
e_{\ell} &= \frac{s(I_{\ell})}{|I_{\ell}|}.
\end{align*}
We may consider $e_{\ell} \in \mathbb{Z}_p[G]$ since $p \nmid [k:\mathbb{Q}]$.

\begin{corollary} \label{rays}
Suppose $\ell \neq p$ is a rational prime and $\mfrak{L}$ the product of primes of $\mfrak{o}$ over $\ell$. Then for a positive integer $e$, we have
\[ \left( \bigslant{\mfrak{o}}{\mfrak{L}^e} \right)^{\times} \otimes \mbb{Z}_p \simeq
\bigslant{\mbb{Z}_p[G]}{ \big(\ell - \sigma_{\ell} \cdot e_{\ell} \big)}. \]
If $e(p)$ is less than $p-1$, then
\[ \left( \bigslant{\mfrak{o}}{p^e} \right)^{\times} \otimes \mbb{Z}_p \simeq \bigslant{\mbb{Z}_p[G]}{\big( p^{e-1}(p- \sigma_p \cdot e_p) \big)}.\]
\end{corollary}

\begin{proof}
Suppose $\ell \neq p$. Then
\[ \left( \bigslant{\mfrak{o}}{\mfrak{L}^e} \right)^{\times} \otimes \mbb{Z}_p \simeq \left( \bigslant{\mfrak{o}}{\mfrak{L}} \right)^{\times} \otimes \mbb{Z}_p \simeq \prod_{\mfrak{l} \mid \ell} \left( \bigslant{\mfrak{o}}{\mfrak{l}} \right)^{\times} \otimes \mbb{Z}_p,\]
where the product is over all prime ideals $\mfrak{l}$ of $\mfrak{o}$ dividing $\ell$. Fix such an ideal $\mfrak{l}$ and let $u \in \mfrak{o}^{\times}$ such that $u \bmod{\mfrak{l}}$ is a generator for the group $(\mfrak{o}/\mfrak{l})^{\times}$ and $u \equiv 1 \bmod{\mfrak{l}^{\sigma}}$ for all $\sigma \in G$, $\sigma \neq 1$. We have
\[ \left( \bigslant{\mfrak{o}}{\mfrak{L}} \right)^{\times} \otimes \mbb{Z}_p = \langle \mfrak{u} \rangle_{\mbb{Z}_p[G]}, \qquad \text{where} \quad  \mfrak{u}=u \otimes 1 \in \left( \bigslant{\mfrak{o}}{\mfrak{L}} \right)^{\times} \otimes \mbb{Z}_p. \]
Consider the surjective map
\begin{align*}
\varphi: \mbb{Z}_p[G] &\to \left( \bigslant{\mfrak{o}}{\mfrak{L}} \right)^{\times} \otimes \mbb{Z}_p \\
\theta &\mapsto \mfrak{u}^{\theta}.
\end{align*}
Clearly, $\big( \ell- \sigma_{\ell} \cdot e_{\ell} \big) \subseteq \ker \varphi$. The claim will now follow from the fact that the quotient ring $\mbb{Z}_p[G]$ modulo the ideal $\big( \ell-\sigma_{\ell} \cdot e_{\ell} \big)$ has the correct order. To compute this order, we note that $\chi$ induces a homomorphism from $\mbb{Z}_p[G] \to \mcal{O}$ in a natural way so that
\[ \left| \bigslant{\mbb{Z}_p[G]}{(\ell- \sigma_{\ell} \cdot e_{\ell})} \right| = \prod_{\chi} \left| \chi( \ell - \sigma_{\ell} \cdot e_{\ell} ) \right|_p^{-1}.\]
Note that
\[ \chi \big( \ell-\sigma_{\ell} \cdot e_{\ell} \big) = \begin{cases}
\ell, & I_{\ell} \not \subset \ker \chi \\
\big( \ell - \chi(\sigma_{\ell}) \big), & I_{\ell} \subset \ker \chi.
\end{cases} \]
So
\[ \left| \bigslant{\mbb{Z}_p[G]}{\big( \ell-\sigma_{\ell} \cdot e_{\ell} \big)} \right| = \prod_{\chi \in \widehat{G'}} \left| \ell - \chi(\sigma_{\ell}) \right|_p^{-1},\]
where $G' = G/I_{\ell}$. Let $G'_{\ell} = G_{\ell}/I_{\ell}$ where $G_{\ell}$ is the decomposition group for $\ell$. Since the order of $\sigma_{\ell} \in G'_{\ell}$ is $f=[\mfrak{o}/\mfrak{l}: \mathbb{Z}/(\ell)]$, it follows that
\begin{align*}
\prod_{\chi \in \widehat{G'}} \big( \ell - \chi(\sigma_{\ell}) \big) &= \prod_{a=0}^{f-1} \big( \ell- \zeta_f^a \big)^r , \qquad r=[G:G_{\ell}] \\
&= \big( \ell^f -1 \big)^r.
\end{align*}
So
\[ \left| \bigslant{\mbb{Z}_p[G]}{\big( \ell-\sigma_{\ell} \cdot e_{\ell} \big)} \right| = |\ell^f - 1|_p^{-r} = \left| \left(\bigslant{  \mfrak{o}}{\mfrak{L}} \right)^{\times} \otimes \mbb{Z}_p \right| = \left| \left(\bigslant{  \mfrak{o}}{\mfrak{L}^e} \right)^{\times} \otimes \mbb{Z}_p \right| .\]
This proves the claim in the $\ell \neq p$ case.

Now, suppose $\ell = p$ and $e(p) < p-1$. For each prime $\mfrak{p}$ of $\mfrak{o}$ dividing $p$, let $U_{\mfrak{p}}^{(j)} = \{ x \in \mfrak{o}: x \equiv 1 \bmod{\mfrak{p}^j} \}$. Then we have
\begin{equation} \label{decomp} \left( \bigslant{\mfrak{o}}{p^e} \right)^{\times} \otimes \mbb{Z}_p \simeq \prod_{\mfrak{p} \mid p} \left( \bigslant{\mfrak{o}}{\mfrak{p}^{e \cdot e(p)}} \right)^{\times} \otimes \mbb{Z}_p \simeq \prod_{\mfrak{p} \mid p} \bigslant{U_{\mfrak{p}}^{(1)} }{U_{\mfrak{p}}^{(e \cdot e(p))}}. \end{equation}
For all $x \in \mfrak{o}$ satisfying $|x|_p < p^{-1/(p-1)}$, we have that $\log_p(1+x)$ and $\exp_p(x)$ are defined by their power series', moreover, $\exp_p \left( \log_p(1+x) \right) = 1+x$. Since $e(p) < p-1$, we have that $|1+x|_p< p^{-1/(p-1)}$ for all $1+x \in U_{\mfrak{p}}^{(1)}$. Thus the following map is a $\mbb{Z}_p[G_p]$-isomorphism:
\begin{align*}
\bigslant{U_{\mfrak{p}}^{(1)}}{U_{\mfrak{p}}^{(e \cdot e(p))}} &\to \bigslant{\mfrak{p}}{\mfrak{p}^{e\cdot e(p)}} \\
(1+x) &\mapsto \log_p(1+x) \bmod{\mfrak{p}^{e \cdot e(p)}}.
\end{align*}
From \Cref{decomp}, we now have
\[ \left( \bigslant{\mfrak{o}}{p^e} \right)^{\times} \otimes \mbb{Z}_p \simeq \prod_{\mfrak{p}\mid p} \bigslant{\mfrak{p}}{\mfrak{p}^{e\cdot e(p)}} \simeq \bigslant{\mfrak{P}}{p^e} \]
as $\mbb{Z}_p[G]$-modules where $\mfrak{P}$ is the product of primes of $\mfrak{o}$ over $p$. It follows from \Cref{cyclicmodules} that $\mfrak{P} \otimes \mbb{Z}_p$ is a cyclic module, so the above isomorphism tells us that $( \mfrak{o}/p^e )^{\times} \otimes \mbb{Z}_p$ is cyclic. It remains to show that $\mfrak{P}/(p^e)$ is isomorphic to $\mathbb{Z}_p[G]/(p^e-p^{e-1} \sigma_p \cdot e_p)$ as a $\mathbb{Z}_p[G]$-module.

Since $\mfrak{o}/(p^e)$ is cyclic, there is an onto homomorphism $\psi : \mathbb{Z}_p[G] \to \mfrak{o}/(p^e)$. Note that
\[ (p,1-e_p)= p \mbb{Z}_p[G] + (1-e_p) \mbb{Z}_p[G] \xrightarrow{\psi} \mfrak{P}/(p^e).\]
Moreover, since $e_{\chi} (1-e_p) = 0$ for all $\chi$ that are trivial on $I_p$ (otherwise $e_{\chi}(1-e_p) = e_{\chi}$), we have that
\[ \left| \bigslant{\mathbb{Z}_p[G]}{(p,1-e_p)} \right| = \prod_{\ker \chi \supseteq I_p} p = p^{[G:I_p]} = \left| \bigslant{\mfrak{o}}{\mfrak{P}} \right|, \]
so $\psi\big( (p,1-e_p) \big) = \mfrak{P}/(p^e)$. From the preceding paragraph, we know that $\mfrak{P}/(p^e)$ is also cyclic, so there exists an onto homomorphism $\psi': \mbb{Z}_p[G] \to \mfrak{P}/(p^e)$. Since $(p^e-p^{e-1} \sigma_p \cdot e_p) \cdot (p,1-e_p) \subseteq (p^e)$, it follows that $(p^e-p^{e-1} \sigma_p \cdot e_p) \subseteq \ker \psi'$.

\begin{multicols}{2}
\tikzset{>=latex}
\begin{tikzpicture}[x=1.9cm,y=1.5cm]
\draw node (Zp) at (0,0) {$\mathbb{Z}_p[G]$};
\draw node (1) at (0,-1) {$(p,1-e_p)$};
\draw node (2) at (0,-2) {$(p^e)$};
\draw node (a) at (1,0) {$\mfrak{o}$};
\draw node (b) at (1,-1) {$\mfrak{P}$};
\draw node (c) at (1,-2) {$(p^e)$};
\draw node (zp) at (2,-1) {$\mathbb{Z}_p[G]$};
\draw node (i) at (2,-2) {$\ker \psi'$};
\draw node (ii) at (2,-3) {$(p^e-p^{e-1}e_p)$};

\draw[->] (Zp) -- (a) node[midway,above] {$\psi$};
\draw[->] (1) -- (b);
\draw[->] (2) -- (c);
\draw (Zp) -- (1) -- (2);
\draw (a) -- (b) -- (c);
\draw (zp) -- (i)  -- (ii);

\draw[->] (zp) -- (b) node[midway,above] {$\psi'$};
\draw[->] (i) -- (c);
\end{tikzpicture}

Since $e_{\chi} e_p = 0$ for all $\chi$ that are non-trivial on $I_p$ (otherwise $e_{\chi} e_p =e_{\chi}$), we have that $\left| \bigslant{\mbb{Z}_p[G]}{(p^e-p^{e-1} \sigma_p \cdot e_p)} \right|$ equals
\begin{align*}
\prod_{\ker \chi \supseteq I_p} p^{e-1} \prod_{\ker \chi \not \supseteq I_p} p^e &= p^{|G|e - [G:I_p]} \\
&= \left| \bigslant{\mfrak{P}}{(p^e)} \right|.
\end{align*}
So $\ker \psi' = \big( p^e - p^{e-1} \sigma_p \cdot e_p \big)$, and the claim follows.
\end{multicols}
\end{proof}

\begin{remark} If one takes $E = M$ (or $\mfrak{o} = M$) in \Cref{cycliclemma}, then (i) and (ii) are also equivalent to the statement that there exists $\epsilon \in E$ (or $\alpha \in \mfrak{o}$) such that $[E: \langle \epsilon \rangle_{\mbb{Z}[G]}]$ (or $[\mfrak{o} : \langle \alpha \rangle_{\mbb{Z}[G]}]$) is finite and co-prime to $p$. In particular, there exists $\epsilon \in E$ such that $\epsilon \otimes 1$ generates $E \otimes \mbb{Z}_p$ as a $\mbb{Z}_p[G]$-module.

The results of \Cref{rays,specificcyclic} also hold under extension of scalars. For example, we have
\[ E \otimes \mcal{O} \simeq \bigslant{\mcal{O}[G]}{\big( s(G) \big)}, \quad \left( \bigslant{\mfrak{o}}{\mfrak{L}^e} \right)^{\times} \otimes \mcal{O} \simeq \bigslant{\mcal{O}[G]}{(\ell - \sigma_{\ell} \cdot e_{\ell})}.\]

\end{remark}

\section{The Ray Class Gras Conjecture}

In this section, we aim to prove \Cref{main}. \Cref{index} was proven by way of the mapping $l : k^{\times} \to \mathbb{R}[G]$ defined by
\[ l(x) = -\frac{1}{2} \sum_{\sigma \in G} \log| x^{\sigma}| \sigma^{-1}.\]
Since $E_d/C_d \simeq l(E_d)/l(C_d)$, the index $[E_d:C_d]$ may be studied by decomposing the index $[l(E_d):l(C_d)]$ into various parts. Our method will be similar, but we work $p$-adically.

\subsection{Notations \& Preliminaries}

Fix a positive integer $d$ and let $m$ be the conductor of $k$. Let $\Omega_p$ (resp. $\Omega$) denote the algebraic closure of $\mathbb{Q}_p$ (resp. $\mathbb{Q}$), and fix an embedding of $\Omega \hookrightarrow \Omega_p$ so that we may view $\Omega \subseteq \Omega_p$. Let $k_p$ denote the topological closure of $k$, and let
\begin{itemize}
\item $K$ denote the field $k_p$ adjoined with all character values of $G$,
\item $\mcal{O}$ denotes the valuation integers of $K$.
\end{itemize}
Throughout this section we assume that
\begin{itemize}
\item $n > 1$ and $n \nmid \overline{d}$ unless stated otherwise,
\item $p$ is an odd prime not dividing $[k:\mathbb{Q}]$,
\item $e(p)$ is the ramification index of $p$ in $k$ (or in $K$), and is less than $p-1$,
\item $f(p)$ is the residue of $p$ in $K$.
\end{itemize}

A \emph{lattice} $L$ in a $K$-vector space $V$ is a free $\mcal{O}$-module whose $\mcal{O}$-rank equals the $K$-dim of $KL$ and $V=KL$. If $L$ and $M$ are lattices in $V$ such that there exists an automorphism $\phi$ of $V$ where $\phi(L) = M$, then we define the generalized index $(L:M)$ by
\[ (L:M) = p^{f(p) \cdot \ord_{\varpi} \det \phi} = \left| \det \phi \right|_p^{-e(p) f(p)},\]
where $(\varpi)$ is the prime ideal of $\mcal{O}$. This index is independent of the choice of $\phi$, and if $M \subseteq L$ with $[L:M]<\infty$, then $(L:M) = [L:M]$.

If $M$ is a lattice contained in $K[G]$ and $\alpha \in K[G]$, we let $\alpha M$ denote lattice
\[ \alpha M = \{ \alpha m : m \in M\}.\]
In particular, we write $M_{\chi}$ for $e_{\chi} M$. If $M$ is additionally a $G$-module, we have the natural decomposition
\[ M = \bigoplus M_{\chi},\]
the direct sum taken over all $\chi$ such that $M_{\chi} \neq 0$. A character $\chi \in \widehat{G}$ naturally induces a ring homomorphism $K[G] \to K$. The index $(M : \alpha M)$ exists if and only if $\chi(\alpha) \neq 0$ whenever $M_{\chi} \neq 0$. If this is the case, then
\[ (M : \alpha M) = \prod \left| \chi(\alpha) \right|_p^{-e(p)f(p)},\]
the product taken over all $\chi$ such that $M_{\chi} \neq 0$.

For $\alpha \in \mcal{O}[G]$, note that $\chi(\alpha) e_{\chi} = e_{\chi} \alpha$. In this case, we define $\alpha^{-1} \in K[G]$ to be the element defined by
\[ \alpha^{-1} := \sum \chi(\alpha)^{-1} e_{\chi},\]
where the sum is taken over all characters $\chi$ satisfying $\chi (\alpha) \neq 0$. This gives us
\[ \alpha \cdot \alpha^{-1} = \sum e_{\chi},\]
the sum over all $\chi$ such that $\chi(\alpha) \neq 0$.

Since we're assuming $e(p) < p-1$, we get that $\log_p(k^{\times}) \subseteq \mcal{O}$ where $\log_p$ is the Iwasawa logarithm. Let $\vartheta: k^{\times} \to \mcal{O}[G]$ be the $G$-module map defined by
\[ \vartheta(x) = \sum_{\sigma \in G} \log_p(x^{\sigma}) \sigma^{-1}.\]
This map takes the place of the map $l$ defined at the forefront of this section. If $x \in \ker \vartheta$, then $x = \zeta p^r$ where $\zeta$ is a root of unity and $r$ is a rational number. Since $k$ is real and Galois, it follows that $\ker \vartheta$ consists of $\pm 1$ and either all powers of $p$ or all powers of $\sqrt{p}$. We fix the following notation to be used throughout:
\begin{align*}
\mathscr{E}_d &= \text{the ideal of $\mcal{O}[G]$ generated by $\vartheta(E_{\dinf})$} \\
\mathscr{C}_d &= \text{the ideal of $\mcal{O}[G]$ generated by $\vartheta(C_{\dinf})$}
\end{align*}
We omit subscripts when $d=1$. Since $\mcal{O}$ is a flat $\mbb{Z}$-module, we have that
\[ \bigslant{E_{\dinf} \otimes \mcal{O}}{C_{\dinf} \otimes \mcal{O}} \simeq \bigslant{E_d}{C_d} \otimes \mcal{O}.\]
Consider the natural map from $E_d \otimes \mcal{O} \to \mathscr{E}_d$ defined by $(\epsilon \otimes \alpha) \mapsto \vartheta(\epsilon) \alpha$ extended by linearity. Since $\ker \vartheta$ consists of $\pm 1\cdot$(rational powers of $p$) and $k$ satisfies Leopoldt's conjecture with $p$ an odd prime, it follows that
\[ E_d \otimes \mcal{O} \simeq \mathscr{E}_d.\]
In fact, we have
\[ \bigslant{E_d}{C_d} \otimes \mcal{O} \simeq \bigslant{\mathscr{E}_d}{\mathscr{C}_d}.\]
As in the previous section, if $H \subseteq G$, then we write $s(H)$ to denote the sum in $\mathbb{Z}[G]$ of those automorphisms in $H$. We write
\[ \resum_{\sigma \in G/H} \]
for the restricted sum over a system of unique representatives in $G$ of $G/H$. We also keep the notations for $\sigma_{\ell}$, $e_{\ell}$ and $I_{\ell}$ from the previous section.

We will often make use of the following proposition.

\begin{proposition} The group ring $\mcal{O}[G]$ is a principle ring.
\end{proposition}
\begin{proof}
Note that $\mcal{O}/(\varpi^n)$ is a local Artinian principle ring. Since $G$ is assumed to have no $p$-part, it follows that $\mcal{O}/(\varpi^n)[G] \simeq \mcal{O}[G]/(\varpi^n \mcal{O}[G])$ is a principal ring \cite[Theorem 4]{Dorsey}. So for an ideal $I$ of $\mcal{O}[G]$, for every $n \in \mathbb{N}$, there exists $\alpha_n \in \mcal{O}[G]$ such that
\[ \bigslant{I + \varpi^n\mcal{O}[G]}{\varpi^n \mcal{O}[G]} = \bigslant{(\alpha_n) + \varpi^n \mcal{O}[G]}{\varpi^n \mcal{O}[G]}.\]
Since $\mcal{O}[G]$ is compact (in the product topology) we have that $\alpha_n \to \alpha \in \mcal{O}[G]$, i.e., there exists $\alpha \in \mcal{O}[G]$ such that $\alpha \equiv \alpha_n \bmod{\varpi^n \mcal{O}[G]}$ for all $n \in \mathbb{N}$. We now show that $I=(\alpha)$. For each $\gamma \in I$, $n \in \mathbb{N}$, there exists $\beta,\beta_n \in \mcal{O}[G]$ such that $\alpha \beta_n \equiv \gamma \bmod{\varpi^n \mcal{O}[G]}$ and $\beta_n \to \beta$. Then $\gamma = \alpha \beta$, hence $I$ is principle generated by $\alpha$.
\end{proof}

\subsection{The Modules $\mathscr{E}$ and $\mathscr{C}$}

For a non-trivial character $\chi \in \widehat{G}$ of conductor $f_{\chi}$, let $L_p'(0,\chi)$ denote the special value
\[ L_p'(0,\chi) = \sum_{a=1}^{f_{\chi}} \log_p(1-\zeta_{f_{\chi}}^a) \overline{\chi}(a) \in \mcal{O}.\]
And let
\[ \omega' = \sum_{\chi \neq 1} L_p'(0,\chi) e_{\chi} \in \mcal{O}[G].\]

In the pages following, our goal will be to dissect the $\chi$-components of $\bigslant{\mathscr{E}_d}{\mathscr{C}_d}$. The following useful proposition is a $p$-adic formulation of a result of Sinnott.

 \begin{proposition} \label{padicsinnott}
Let $n > 1$, and write $k^n$ for $\mbb{Q}(\zeta_{n}) \cap k$, $G_n$ for $\Gal(k/k^{n})$, and $\delta_n^{(t)}$ for $N^{\mbb{Q}(\zeta_{n})}_{k^{n}}(1-\zeta_n^t)$. Then
\begin{equation} \label{sinnott} (1-e_1) \cdot \vartheta (\delta_n^{(t)}) = \omega' \cdot [\mathbb{Q}(\zeta_n): k^n \mathbb{Q}(\zeta_{n/t})] \cdot s\big( G_{n/t} \big) \cdot \prod_{\ell \mid n} \left( 1-\sigma_{\ell}^{-1} \cdot e_{\ell} \right). \end{equation}
\end{proposition}
\begin{proof}
Let $L$ and $R$ denote the respective left and right hand sides of \Cref{sinnott}. It suffices to show that $\psi(R) = \psi(L)$ for every $\psi \in \widehat{G}$.

Since
\[ \vartheta \big( \delta_n^{(t)} \big) = [\mathbb{Q}(\zeta_n):k^n \mathbb{Q}(\zeta_{n/t})] \cdot s(G_{n/t}) \cdot \resum_{\sigma \in G/G_{n/t}} \log_p \big(
\big( \delta_{n/t}^{(1)} \big)^{\sigma} \big) \sigma^{-1},\]
it follows that $\psi(R) = \psi(L)=0$ if $\psi$ is either the trivial character or is a non-trivial character on $G_{n/t}$.

Now, suppose $\psi\neq 1$ and $\psi$ is trivial on $G_{n/t}$. Note that
\[ \psi(R) = L_p'(0,\psi) [\mathbb{Q}(\zeta_n): k^n \mathbb{Q}(\zeta_{n/t})] \cdot |G_{n/t}| \cdot \prod_{\ell \mid n/t} \left( 1-\overline{\psi}(\ell) \right)\]
and
\[ \psi \big( \vartheta(\delta_n^{(t)}) \big) = [\mathbb{Q}(\zeta_n):k^n \mathbb{Q}(\zeta_{n/t})] \cdot |G_{n/t}| \cdot \resum_{\sigma \in G/G_{n/t}} \log_p\big( \big( \delta_{n/t}^{(1)} \big)^{\sigma} \big) \overline{\psi}(\sigma),\]
so
\[ \psi(L) = [\mathbb{Q}(\zeta_n): k^n \mathbb{Q}(\zeta_{n/t})] \cdot |G_{n/t}| \cdot \sum_{\substack{a=1 \\ (a,n/t)=1}}^{n/t} \log_p(1-\zeta_{n/t}^a)\overline{\psi}(a).\]
Write $f_{\psi}$ for the conductor of $\psi$. Since $G_{n/t} \leq \ker \psi$, it follows that $f_{\psi} \mid n/t$. We set $f_{\psi} g = n/t$ and interpret $\log_p(0) \cdot 0$ to be equal to $0$ to obtain
\begin{align*}
\sum_{\substack{a=1\\ (a,n/t)=1}}^{f_{\psi}g} \log_p(1-\zeta_{n/t}^a) \overline{\psi}(a) &= \sum_{a=1}^{f_{\psi} g} \log_p(1-\zeta_{n/t}^a) \overline{\psi}(a) \cdot \prod_{\ell \mid n/t} (1- \overline{\psi}(\ell)) \\
&= \sum_{a=1}^{f_{\psi}} \log_p(1-\zeta_{f_{\psi}}^a) \overline{\psi}(a) \cdot \prod_{\ell \mid n/t} (1- \overline{\psi}(\ell)) \\
&= L_p'(0,\psi) \cdot \prod_{\ell \mid n/t} (1- \overline{\psi}(\ell)).
\end{align*}
So $\psi(L) = \psi(R)$, and this completes the proof of the proposition.
\end{proof}

Using \Cref{specificcyclic}, we may fix an $\epsilon \in E$ such that $\vartheta(\epsilon)=\varepsilon$ generates $\mathscr{E}$ as an ideal in $\mathcal{O}[G]$. We define a special element $\omega \in \mathscr{C}$ by
\[ \omega = \sum_{\chi \neq 1} e_{\chi} \vartheta(\delta_{\chi}), \qquad \delta_{\chi} = N^{\mathbb{Q}(\zeta_{f_{\chi}})}_{k^{f_{\chi}}} (1-\zeta_{f_{\chi}}),\]
where $f_{\chi}$ is the conductor of $\chi$. This element generates $\mathscr{C}$:

\begin{corollary} \label{padicsinnottcor} The ideal $\mathscr{C} \subseteq \mcal{O}[G]$ is generated by $\omega$, in fact, we have
\[ \mathscr{C} = \omega \varepsilon^{-1} \cdot \mathscr{E}.\]
In particular, if $\chi \neq1$, then we have
\[ [\ \mathscr{E}_{\chi}: \mathscr{C}_{\chi} ] = \left| \chi(\varepsilon)^{-1} L_p'(0,\chi) \right|_p^{-e(p)f(p)}.\]
\end{corollary}
\begin{proof}

Let $\mathscr{D}$ denote the $\mcal{O}[G]$-ideal generated by $\vartheta(D)$. Note that $\mathscr{C}$ is the kernel in $\mathscr{D}$ of multiplication by $s(G)$, i.e.,
\[ (1-e_1) \mathscr{D} = \mathscr{C}.\]
Since $\omega = (1-e_1) \omega$, we apply \Cref{padicsinnott} to obtain
\[ \chi(\omega) = |G_{f_{\chi}}|  L_p'(0,\chi).\]
Since $|G_{f_{\chi}}|$ is a $p$-adic unit, it follows that
\[\chi(\varepsilon)^{-1} L_p'(0,\chi) \cdot e_{\chi} \in \left( \varepsilon^{-1} \mathscr{C} \right)_{\chi}.\]
Since
\[ \bigslant{\mathscr{E}}{\mathscr{C}} \simeq \bigslant{ \varepsilon^{-1} \mathscr{E} }{\varepsilon^{-1} \mathscr{C}} = \bigslant{(1-e_1) \mcal{O}[G]}{\varepsilon^{-1} \mathscr{C}}, \]
it follows that
\begin{equation} \label{chiparts} [\mathscr{E}_{\chi} : \mathscr{C}_{\chi}] \leq \left| \chi(\varepsilon)^{-1} L_p'(0,\chi) \right|_p^{-e(p)f(p)}.\end{equation}
Moreover, \Cref{index} gives us that
\[ \left| \Cl \otimes \mcal{O} \right| =  [\mathscr{E}:\mathscr{C}] \leq \prod_{\chi \neq 1} \left| \chi(\varepsilon)^{-1} L_p'(0,\chi) \right|_p^{-e(p)f(p)}.\]

Now, note that
\[ \prod_{\chi \neq 1} \chi(\varepsilon) = \det (\alpha \mapsto \varepsilon \alpha) = \Reg'_p \]
where $\Reg'_p$ differs from $\Reg_p$, the Leopoldt regulator of $k$, by a unit of $\mbb{Z}_p$. Substituting $\Reg'_p$ into the $p$-adic class number formula, we get
\begin{equation} \label{classnumber} |\Cl| =_p \frac{1}{\Reg_p} \prod_{\chi \neq 1} L_p'(0,\chi) =_p \prod_{\chi \neq 1} \chi(\varepsilon)^{-1} L_p'(0,\chi), \end{equation}
where $a=_p b$ means that $a$ and $b$ differ by a $p$-adic unit. So \Cref{classnumber} and \Cref{index} give
\begin{equation} \label{chiparts2} [\mathscr{E}: \mathscr{C}] = \prod_{\chi \neq 1} [\mathscr{E}_{\chi}: \mathscr{C}_{\chi}] \leq \prod_{\chi \neq 1}  \left| \chi(\varepsilon)^{-1} L_p'(0,\chi) \right|_p^{-e(p)f(p)} = [\mathscr{E}:\mathscr{C}] .\end{equation}
Putting \Cref{chiparts,chiparts2} together, it must be that
\[  [\mathscr{E}_{\chi}: \mathscr{C}_{\chi}] = \left| \chi(\varepsilon)^{-1} L_p'(0,\chi) \right|_p^{-e(p)f(p)}, \]
for all $\chi \neq 1$. It follows that
\[ \mathscr{C} = \omega \varepsilon^{-1} \cdot \mathscr{E} = \omega (1-e_1) \cdot \mathcal{O}[G] = \omega \cdot \mathcal{O}[G].\]
This completes the proof of the corollary.
\end{proof}

\subsection{The Modules $\mathscr{E}_d$ and $\mathscr{C}_d$}
We will make use of the following auxiliary $\mcal{O}[G]$-modules:
\begin{align*}
\mathscr{D}(d) &= \text{the ideal of $\mcal{O}[G]$ generated by $\vartheta(D(d))$} \\
\mathscr{C}(d) &= \text{the ideal of $\mcal{O}[G]$ generated by $\vartheta(C(d))$}.
\end{align*}
Again, we omit parentheses when $d=1$. For any divisor $t \mid n$, let $\alpha_n(t) \in \mcal{O}[G]$ be defined by
\[ \alpha_n(t) := [\mathbb{Q}(\zeta_n) : k^n \mathbb{Q}(\zeta_{n/t})] \cdot s(G_{n/t}) \cdot \prod_{\ell \mid n/t} (1- \sigma_{\ell}^{-1} \cdot e_{\ell}),\]
the product taken over primes $\ell$ dividing $\dfrac{n}{t}$. Let $\mathscr{U}$ denote the $\mcal{O}[G]$-ideal generated by these elements $\alpha_n(t)$ (for all $n \geq 1$, and all $t \mid n$). Then \Cref{padicsinnott} reads
\[ (1-e_1) \vartheta(\delta_n^{(t)}) = \omega' \cdot \alpha_{n}(t).\]
More generally, since $\omega' \cdot \alpha_1(1) = \omega' \cdot \alpha_n(n) = 0$, it follows that
\[ \mathscr{C} = (1-e_1) \mathscr{D} = \omega' \cdot \mathscr{U}.\]

We need to know what the above formula looks like if we replace $\mathscr{C}$ with $\mathscr{C}(d)$. Towards that end, for an integer $t>1$, we define
\[ \kappa_t = \prod_{\ell \mid t} (\ell - \sigma_{\ell}), \]
the product taken over all primes dividing $t$. We first make a few observations that whittle down the ideal $\mathscr{D}(d)$.

\begin{lemma} Let $\bar{d}_m = (\bar{d},m)$. Then
\[ \mathscr{D}(d) = \frac{d}{\bar{d}} \cdot \kappa_{\bar{d}/\bar{d}_m} \cdot \mathscr{D}(\bar{d}_m).\]
\end{lemma}
\begin{proof}
This follows from \cite[Lemmas 3.2 and 3.5]{Schmidt}.
\end{proof}

For $n \geq 1$, we let
\[ \alpha_{n,\bar{d}_m} := \kappa_{\bar{d}_m/(\bar{d}_m,n)} \cdot \sum_{t \mid (\bar{d}_m,n)} \mu(t) \cdot \frac{ (\bar{d}_m,n)}{t} \cdot \alpha_n(t).\]
Note that since $p \nmid [k:\mathbb{Q}]$ we get that $\alpha_{n,\bar{d}_m} \in \mcal{O}[G]$ for all $n \geq 1$.

\begin{corollary} \label{Uprime} If $n >1$ and $n \nmid \bar{d}_m$, then
\[ (1-e_1) \vartheta(\delta_{n,\bar{d}_m}) = \omega' \cdot \alpha_{n,\bar{d}_m}.\]
\end{corollary}
\begin{proof}
For a prime divisor $\ell$ of $\bar{d}_m$ such that $\ell \nmid n$, we have
\[ \delta_{n,\bar{d}_m} = N^{\mathbb{Q}(\zeta_n)}_{k^n} \prod_{t \mid \frac{\bar{d}_m}{\ell}}  \big( 1-\zeta_n^t \big)^{\mu(t) \bar{d}_m/(t\ell) \cdot \ell} \cdot \big(1- \zeta_n^t \big)^{\mu(t\ell) \bar{d}_m/(t\ell) \cdot \sigma_{\ell}} = \delta_{n,\bar{d}_m/\ell}^{\ell-\sigma_{\ell}}.\]
So
\[ \vartheta(\delta_{n,\bar{d}_m}) = \kappa_{\bar{d}_m/(\bar{d}_m,n)} \cdot \vartheta( \delta_{n,(\bar{d}_m,n)} ).\]
Since
\[ \vartheta( \delta_{n,(\bar{d}_m,n)} ) = \sum_{t \mid (\bar{d}_m,n)} \mu(t) \cdot \frac{(\bar{d}_m,n)}{t} \vartheta(\delta_n^{(t)}),\]
the corollary follows from \Cref{padicsinnott}.
\end{proof}

\begin{lemma} \label{restrictn}
If $\delta_{n,\bar{d}_m} \in D(\bar{d}_m)$ and $q$ is a prime dividing $(n,\bar{d}_m)$ such that $v_q(n) > v_q(m)$, then $\delta_{n,\bar{d}_m} =1$.
\end{lemma}
\begin{proof}
Note that
\[ \delta_{n,\bar{d}_m} = \prod_{t \mid \frac{\bar{d}_m}{q}} N^{\mbb{Q}(\zeta_n)}_{k^n} \left( \frac{ (1-\zeta_n^t)^q }{1-\zeta_n^{tq}} \right)^{\mu(t) \cdot \frac{\bar{d}_m/q}{t}}.\]
Since $v_q(n) > v_q(m)$, it follows that $k^n \subseteq \mathbb{Q}(\zeta_{n/q}) \subseteq \mathbb{Q}(\zeta_n)$. Now consider
\[ N^{\mbb{Q}(\zeta_n)}_{k^n} \left( \frac{ (1-\zeta_n^t)^q }{1-\zeta_n^{tq}} \right) = N^{\mathbb{Q}(\zeta_{n/q})}_{k^n} N^{\mathbb{Q}(\zeta_n)}_{\mathbb{Q}(\zeta_{n/q})} \left( \frac{ (1-\zeta_n^t)^q }{1-\zeta_n^{tq}} \right).\]
Since $\zeta_n^t = \zeta_{n/t}$ and $q^2 \mid n$, we have
\[ N^{\mathbb{Q}(\zeta_n)}_{\mathbb{Q}(\zeta_{n/q})} (1-\zeta_{n/t}) = 1-\zeta_{n/(tq)} ,\]
whence the claim of the proposition.
\end{proof}

\begin{proposition} \label{Cd} Let $\mathscr{U}(\bar{d}_m)$ be the $\mcal{O}[G]$-ideal generated by the $\alpha_{n,\bar{d}_m}$ with $n \mid m$. Then
\[ \mathscr{C}(d) = (1-e_1) \mathscr{D}(d) = \dfrac{d}{\bar{d}} \cdot \kappa_{\bar{d}/\bar{d}_m} \cdot \omega' \cdot \mathscr{U}(\bar{d}_m).\]
\end{proposition}
\begin{proof}
Let $\mathscr{U}'(\bar{d}_m)$ be the $\mcal{O}[G]$-ideal generated by $\alpha_{n,\bar{d}_m}$ satisfying $n > 1$ and $n \nmid \bar{d}_m$. The statement of the proposition then follows from \Cref{Uprime} if we replace $\mathscr{U}(\bar{d}_m)$ with $\mathscr{U}'(\bar{d}_m)$. So the proposition rests on showing that
\[ \omega' \cdot \mathscr{U}(\bar{d}_m) = \omega' \cdot \mathscr{U}'(\bar{d}_m).\]
Toward that end, note that from \Cref{restrictn}, it follows that
\begin{equation} \label{reduction} \alpha_{n,\bar{d}_m} = \alpha_{(n,m),\bar{d}_m} \cdot \prod_{\ell \mid \frac{\bar{n} }{(\bar{n},m)}} (1-\sigma_{\ell}^{-1}).\end{equation}
So $\mathscr{U}'(\bar{d}_m)$ is generated by those $\alpha_{n,\bar{d}_m}$ satisfying $n>1$, $n \nmid \bar{d}_m$, and $n \mid m$. Hence $\omega' \cdot \mathscr{U}'(\bar{d}_m) \subseteq \omega' \cdot \mathscr{U}(\bar{d}_m)$.

Going the other way, note that $\alpha_{1,\bar{d}_m} = \kappa_{\bar{d}_m} \cdot \alpha_1(1)$. Since $\alpha_1(1) = s(G)$, it follows that $\omega' \cdot \alpha_{1,\bar{d}_m} = 0$. Now suppose $\alpha_{n,\bar{d}_m} \in \mathscr{U}(\bar{d}_m)$ where $n \mid \bar{d}_m$. Let $q$ be a prime such that $q \nmid m$ and consider $\alpha_{nq,\bar{d}_m}$. From \Cref{reduction}, we have
\[ \alpha_{nq,\bar{d}_m} = \alpha_{n,\bar{d}_m} \cdot (1-\sigma_q^{-1}).\]
Since any given element of $G$ is the Frobenius of infinitely many primes, it follows that there exists a collection of primes $Q$ such that $|Q| = |G|$ and
\[ \sum_{q \in Q} \alpha_{nq,\bar{d}_m} = \alpha_{n,\bar{d}_m} \cdot \sum_{\tau \in G} (1-\tau^{-1}) = \alpha_{n,\bar{d}_m} \cdot (|G| - s(G)).\]
Hence
\[ \omega' \cdot \sum_{q \in Q} \alpha_{nq,\bar{d}_m} = \omega' \cdot \alpha_{n,\bar{d}_m} \cdot |G|.\]
Since $|G|$ is a $p$-adic unit, it follows that $\omega' \cdot \mathscr{U}(\bar{d}_m) \subseteq \omega' \cdot \mathscr{U}'(\bar{d}_m)$. This completes the proof of the proposition.
\end{proof}

It remains to determine a generator for $\mathscr{U}(\bar{d}_m)$. For $t \in \mbb{N}$, let $\varkappa_t$ be the element of $\mcal{O}[G]$ defined by
\[ \varkappa_{t} = \prod_{\ell \mid t} \big( \ell - \sigma_{\ell} \cdot e_{\ell} \big),\]
the product taken all primes $\ell$ dividing $t$.

\begin{proposition} \label{Udm} The ideal $\mathscr{U}(\bar{d}_m)$ of $\mcal{O}[G]$ is generated by $\varkappa_{\bar{d}_m}$.
\end{proposition}
\begin{proof}
Suppose $\chi=1$. If $n \mid \bar{d}_m$, then
\begin{align*}
\chi(\alpha_{n,\bar{d}_m}) &= \pm \varphi \left( \frac{\bar{d}_m}{(\bar{d}_m,n)} \right) \cdot [\mathbb{Q}(\zeta_n): k^n] \cdot |G| \\
&= \pm \varphi(\bar{d}_m) \cdot [k:k^n].
\end{align*}
Otherwise $\chi( \alpha_{n,\bar{d}_m}) = 0$. On the other hand, we have $\chi( \varkappa_{\bar{d}_m} ) = \varphi(\bar{d}_m)$. Since $[k:k^n]$ is a $p$-adic unit, it follows that $e_1 \mathscr{U}(\bar{d}_m) = e_1 \varkappa_{\bar{d}_m} \mcal{O}[G]$.

Now, let $\chi$ be a non-trivial character of $G$ with conductor $f$. Suppose $n \mid m$ such that $\chi$ is non-trivial on $G_{n/t}$ for all $t \mid (\bar{d}_m,n)$. Then
\[ \chi\big( s(G_{n/t}) \big) = 0,\]
so $\chi( \alpha_{n,\bar{d}_m}) = 0$. So in order to get a non-trivial contribution to the $\chi$-part of $\mathscr{U}(\bar{d}_m)$, we consider those $\alpha_{fN,\bar{d}_m}$ satisfying $N \geq 1$ and $fN \mid m$.

We have
\begin{equation} \label{Ud} \chi( \alpha_{fN,\bar{d}_m}) = \prod_{\ell \mid \frac{\bar{d}_m}{(\bar{d}_m, fN)}} (\ell - \chi(\sigma_{\ell})) \left[ \sum_{t \mid (\bar{d}_m,fN)} \mu(t) \cdot \frac{(\bar{d}_m,fN)}{t} \cdot \chi\big( \alpha_{fN}(t) \big) \right] ,\end{equation}
where
\[ \chi\big( \alpha_{fN}(t)\big) = \begin{cases}
    0 & f \nmid fN/t \\
    [\mathbb{Q}(\zeta_{fN}) : k^{fN} \mathbb{Q}(\zeta_{fN/t})] \cdot | G_{fN/t} | \cdot \displaystyle{\prod_{\ell \mid \frac{Nf}{t}}} \big( 1-\chi(\sigma_{\ell}^{-1} e_{\ell})\big) & f \mid fN/t.
    \end{cases} \]
So as far as $\chi( \alpha_{fN,\bar{d}_m})$ is concerned, we might as well only sum over all those $t \mid (\bar{d}_m, fN)$ satisfying $f \mid fN/t$. Such $t$'s must divide $N$, moreover, it's easy to see that
\[ [\mathbb{Q}(\zeta_{fN}): k^{fN} \mathbb{Q}(\zeta_{fN/t})] \cdot |G_{fN/t}| = [\mathbb{Q}(\zeta_{fN}): \mathbb{Q}(\zeta_{fN/t})] \cdot [k: k^{fN}].\]
Let $A_{fN,\bar{d}_m}$ denote the bracketed term in \Cref{Ud}. So far we have
\[ A_{fN,\bar{d}_m} = \sum_{t \mid (\bar{d}_m,N)} \mu(t) \cdot \frac{ (\bar{d}_m, fN) }{t} \cdot \frac{\varphi(fN)}{\varphi(fN/t)} \cdot [k:k^{fN}] \cdot \prod_{\ell \mid fN/t} \big( 1-\chi(\sigma_{\ell}^{-1}e_{\ell}) \big).\]

Write $N = N_1 N_2^2$ where $N_1$ is a product of distinct primes. Write $(\bar{d}_m, N) = Q_1 Q_2$ where $Q_1$ is the product of those primes that divide $N_1$ and not $N_2$. Then
$A_{fN,\bar{d}_m}$ equals
\[ \sum_{t_1 \mid Q_1} \mu(t_1) \cdot \frac{(\bar{d}_m,fN)}{t_1} \cdot [k:k^{fN}] \left[ \sum_{t_2 \mid Q_2} \frac{\mu(t_2)}{t_2} \cdot \frac{\varphi(fN)}{\varphi(fN/(t_1t_2))} \cdot \prod_{\ell \mid \frac{fN}{t_1 t_2}} \big( 1-\chi(\sigma_{\ell}^{-1}e_{\ell}) \big) \right].\]
Notice that if $Q_2 \neq 1$, then the bracketed term immediately above equals
\[ \varphi(t_1) \cdot \prod_{\ell \mid fN/t_1} \big( 1-\chi(\sigma_{\ell}^{-1}e_{\ell}) \big) \cdot \sum_{t_2 \mid Q_2} \mu(t_2) = 0.\]
So we must have that $N=N_1$, i.e., that $N$ is square-free otherwise $\chi( \alpha_{fN,\bar{d}_m}) = 0$. If we now were to go back to the beginning of this paragraph and let $Q_2$ denote those primes that divide both $N$ and $f$, then we could similarly deduce that $\chi( \alpha_{fN,\bar{d}_m})=0$ if $Q_2 \neq 1$ since the same bracketed term equals zero. Hence, $N$ must not only be square-free but also co-prime to $f$ otherwise $\chi( \alpha_{fN,\bar{d}_m}) = 0$.

Now, suppose $N$ is square-free and co-prime to $f$. Since $\chi(\sigma_{\ell}^{-1} e_{\ell}) = 0$ for all $\ell \mid f$, we now have
\[ A_{fN,\bar{d}_m} = (\bar{d}_m,f) \cdot [k:k^{fN}] \left[ \sum_{t \mid (\bar{d}_m,N)} \mu(t) \cdot \frac{(\bar{d}_m,N)}{t} \cdot \varphi(t) \cdot \prod_{\ell \mid N/t} \big( 1-\chi(\sigma_{\ell}^{-1} e_{\ell}) \big)\right] .\]
Let $B_{fN,\bar{d}_m}$ be the bracketed term in the formula immediately above. Let $q$ be a prime divisor of $(\bar{d}_m,N)$. Then
\[ B_{fN,\bar{d}_m} = \sum_{t \mid \frac{(\bar{d}_m,N)}{q}} \mu(t)  \frac{(\bar{d}_m,N)}{tq}  \varphi(t)  \prod_{\ell \mid N/(qt)} \big( 1-\chi(\sigma_{\ell}^{-1}e_{\ell}) \big) \cdot \big( q(1 - \chi(\sigma_q^{-1}e_q))  - \varphi(q) \big).\]
Repetition of the above on the remaining prime divisors of $(\bar{d}_m, N)$ yields
\[ B_{fN,\bar{d}_m} = \prod_{\ell \mid N/(\bar{d}_m,N)} \big( 1-\chi(\sigma_{\ell}^{-1}e_{\ell}) \big) \cdot \prod_{q \mid (\bar{d}_m,N)} \big( 1- q\chi(\sigma_q^{-1}e_q) \big).\]
This quantity is largest ($p$-adically) when $N \mid \bar{d}_m$. Since $N$ is assumed co-prime to $f$, we have that $\chi(e_q) = 1$ for all $q \mid N$, and for those $q \mid f$, we have $\chi(e_q)=0$. It follows that
\begin{align*}
A_{fN,\bar{d}_m} &= \pm \chi(N) \cdot (\bar{d}_m,f) \cdot [k:k^{fN}] \cdot \prod_{q \mid N} \big( q- \chi(\sigma_q e_q) \big) \\
&= \pm \chi(N) \cdot [k:k^{fN}] \cdot \prod_{q \mid (\bar{d}_m,fN)} (q- \chi(\sigma_qe_q)),
\end{align*}
where
\[ \chi(N) = \prod_{q \mid N} \chi(\sigma_q^{-1}).\]

Picking up where we left off with \Cref{Ud}, we've shown that $\chi( \alpha_{fN,\bar{d}_m})$ is largest ($p$-adically) when $N$ is square-free and co-prime to $f$ with $N \mid \bar{d}_m$ in which case we have
\[ \chi( \alpha_{fN,\bar{d}_m}) = \pm \chi(N) \cdot [k:k^{fN}] \cdot \prod_{\ell \mid \bar{d}_m} \big( \ell - \chi(\sigma_{\ell} e_{\ell}) \big).\]
Since $\chi( \alpha_{fN,\bar{d}_m})$ differs from $\chi( \varkappa_{\bar{d}_m})$ by a $p$-adic unit (and since $\chi$ was arbitrary), it follows that $e_{\chi} \mathscr{U}(\bar{d}_m) = e_{\chi} \varkappa_{\bar{d}_m} \mcal{O}[G]$.

We've shown that $e_{\chi} \mathscr{U}(\bar{d}_m) = e_{\chi} \varkappa_{\bar{d}_m} \mcal{O}[G]$ for all characters $\chi$ of $G$. The proposition now follows.
\end{proof}

Since $E_d$ is a finite index subgroup of $E$, it follows that $( \mathscr{E} / \mathscr{E}_d )_{\chi}$ is finite for every $\chi \neq 1$. Let $\varepsilon_d'$ be a generator for the $\mcal{O}[G]$-ideal $\mathscr{E}_d$. Since $\mathscr{E} = (\varepsilon) \supseteq (\varepsilon_d') = \mathscr{E}_d$, there exists $\varepsilon_d \in \mcal{O}[G]$ such that $\varepsilon \cdot \varepsilon_d = \varepsilon_d'$. So $\mathscr{E}_d = \varepsilon_d \mathscr{E}$, what's more,
\[ [\mathscr{E}: \mathscr{E}_d] = \prod_{\chi \neq 1} \left| \chi(\varepsilon_{d}) \right|_p^{-e(p) f(p)}.\]
Combining \Cref{Udm,Cd}, we immediately obtain the following corollary.

\begin{corollary} \label{bigcor} The $\mcal{O}[G]$-ideal $\mathscr{C}(d)$ is generated by $\displaystyle \frac{d}{\bar{d}} \cdot \omega' \cdot \varkappa_{\bar{d}}$, in fact, we have
\[ \mathscr{C}(d) = \frac{d}{\bar{d}} \cdot \varkappa_{\bar{d}} \cdot \omega'  \cdot \varepsilon^{-1} \cdot \varepsilon_d^{-1} \mathscr{E}_d.\]
\end{corollary}

\subsection{Proof of Theorem 1.1}

Let $\rho$ be as in the statement of the theorem. Note that
\[ |\Syl_p(E_d/C_d)_{\rho}| = |\Syl_p(\Cl_d)_{\rho}| \quad \text{iff} \quad [\mathscr{E}_{d,\rho}:\mathscr{C}_{d,\rho}] = \left| \left( \Cl_d \otimes \mcal{O} \right)_{\rho} \right|.\]
We aim to prove latter equality. Define the linear transformation
\begin{align*}
\phi_{\rho}: \mathscr{E}_{d,\rho}K &\to \mathscr{C}(d)_{\rho}K \\
 x &\mapsto \frac{d}{\bar{d}} \cdot  \varkappa_{\bar{d}} \cdot \omega' \cdot \varepsilon^{-1} \cdot \varepsilon_d^{-1} x.
 \end{align*}
Note that \Cref{bigcor} gives us that $\phi_{\rho}(\mathscr{E}_{d,\rho}) = \mathscr{C}(d)_{\rho}$. Since $C_d \subseteq C(d)$, it follows that
\[ [\mathscr{E}_{d,\rho}: \mathscr{C}_{d,\rho}] \geq (\mathscr{E}_{d,\rho}:\mathscr{C}(d)_{\rho}) = \left| \det \phi_{\rho} \right|_p^{-e(p) f(p)}.\]
Now, we have
\[ \det \phi_{\rho} = \prod \chi \left( \frac{d}{\bar{d}} \cdot \varkappa_{\bar{d}} \cdot \omega' \cdot \varepsilon^{-1} \cdot \varepsilon_d^{-1} \right), \]
where the product is over those $\chi$ such that $\chi(e_{\rho}) \neq 0$. Note that
\[ \chi \left( \frac{d}{\bar{d}} \cdot \varkappa_{\bar{d}} \cdot \omega' \cdot \varepsilon^{-1} \cdot \varepsilon_d^{-1} \right) = \left[ \frac{d}{\bar{d}} \cdot \prod_{\ell \mid \bar{d}} \big( \ell - \chi(\sigma_{\ell}e_{\ell}) \big) \right] \cdot \left[ L_p'(0,\chi) \chi(\varepsilon)^{-1} \right] \cdot \left[ \chi(\varepsilon_{d})^{-1} \right]. \]
Let $\mathscr{O}_d^{\times} = \big( \mfrak{o}/d \big)^{\times} \otimes \mcal{O}$. Extending scalars in \Cref{rays}, we get that
\[ \mathscr{O}_d^{\times} \simeq \prod_{\ell \mid d} \mathscr{O}_{\ell}^{\times}, \quad \text{where} \quad \mathscr{O}_{\ell}^{\times} \simeq \begin{cases} \bigslant{\mcal{O}[G]}{\big( \ell-\sigma_{\ell} \cdot e_{\ell} \big)} & \ell \neq p \\
\bigslant{\mcal{O}[G]}{\big( p^e-p^{e-1} \sigma_p \cdot e_p \big)} & \ell =p.
\end{cases}\]
It follows that $[\mathscr{O}_{d,\chi}^{\times}: 1] = \left| (d/\bar{d}) \prod_{\ell \mid \bar{d}} \big( \ell - \chi(\sigma_{\ell} \cdot e_{\ell}) \big) \right|_p^{-e(p) f(p)}$, and from \Cref{padicsinnottcor}, we have $[\mathscr{E}_{\chi}:\mathscr{C}_{\chi}] = \left| L_p'(0,\chi) \chi(\varepsilon)^{-1} \right|_p^{-e(p) f(p)}$. Hence
\begin{equation} \label{rhoparts} [\mathscr{E}_{d,\rho}: \mathscr{C}_{d,\rho}] \geq (\mathscr{E}_{d,\rho}:\mathscr{C}(d)_{\rho}) =  \frac{ [\mathscr{O}_{d,\rho}^{\times}:1] \cdot [\mathscr{E}_{\rho}: \mathscr{C}_{\rho}]}{ [\mathscr{E}_{\rho}:\mathscr{E}_{d,\rho}]} \end{equation}
Now, recall the ``unscrewing'' of $\Cl(\mfrak{a}) =I(\mfrak{a})/P_{\mfrak{a}}$ where $\mfrak{a}$ is an ideal of $\mfrak{o}$:

\begin{multicols}{2}
\begin{tikzpicture}[scale=.9]
\draw (0,0) node (Cl) {$\Cl$};
\draw (0,-1.5) node (1) {$1$};
\draw (2,0) node (Id) {$I(\mfrak{a})$};
\draw (2,-1.5) node (Pd) {$P(\mfrak{a})$};
\draw (2,-3) node (Pdd) {$P_{\mfrak{a}}$};
\draw (4,-1.5) node (kdE) {$k^{\times}(\mfrak{a}) E$};
\draw (4,-3) node (kddE) {$k_{\mfrak{a}}^{\times} E$};
\draw (4,-4.5) node (kdd) {$k_{\mfrak{a}}^{\times}$};
\draw (6,-3) node (E) {$E$};
\draw (6,-4.5) node (Ed) {$E_{\mfrak{a}}$};

\draw [<-] (Cl) -- (Id);
\draw [->] (Pd) -- (1);
\draw [->] (Pd) -- (kdE);
\draw [->] (Pdd) -- (kddE);
\draw [->] (kddE) -- (E);
\draw [->] (kdd) -- (Ed);

\draw (Cl) -- (1);
\draw (Id) -- (Pd) -- (Pdd);
\draw (kdE) -- (kddE) -- (kdd);
\draw (E) -- (Ed);
\end{tikzpicture}

\begin{itemize}[leftmargin=*]
\item $I(\mfrak{a})$: fractional ideals co-prime to $d$
\item $P(\mfrak{a})$: principal fractional ideals co-prime to $p$
\item $k^{\times}(\mfrak{a})$: elements of $k^{\times}$ co-prime to $\mfrak{a}$
\item $k_{\mfrak{a}}^{\times}$:  elements of $k^{\times}$ congruent to $1$ modulo $\mfrak{a}$.
\item $P_{\mfrak{a}}$:  principal fractional ideals generated by elements of $k_a^{\times}$
\end{itemize}

\end{multicols}

Since \Cref{rhoparts} holds for all $\rho \neq 1$ with $[\mathscr{E}_d:\mathscr{C}_d] = \left| \Cl_d \otimes \mcal{O} \right|$ and $k^{\times}(d) E/k_d^{\times} \simeq \left( \mfrak{o}/d \right)^{\times}$, taking the product over all such $\rho$ we get
\begin{align*}
|\Cl_d \otimes \mcal{O}| = [\mathscr{E}_d:\mathscr{C}_d] &\geq_p \frac{ [(1-e_1)\mathscr{O}_{d}^{\times}:1] \cdot [\mathscr{E}: \mathscr{C}]}{ [\mathscr{E}:\mathscr{E}_{d}]} \\
&= \frac{ [(1-e_1)\mathscr{O}_{d}^{\times}:1] \cdot | \Cl \otimes \mcal{O}|}{ [\mathscr{E}:\mathscr{E}_{d}]} \\
&= | \Cl_d \otimes \mcal{O}|.
\end{align*}
In light of \Cref{rhoparts}, it follows that
\[ [\mathscr{E}_{d,\rho}: \mathscr{C}_{d,\rho}] = \frac{ [\mathscr{O}_{d,\rho}^{\times}:1] \cdot [\mathscr{E}_{\rho}: \mathscr{C}_{\rho}]}{ [\mathscr{E}_{\rho}:\mathscr{E}_{d,\rho}]}.\]
But $[\mathscr{E}_{\rho}: \mathscr{C}_{\rho}] = | \left( \Cl \otimes \mcal{O} \right)_{\rho} |$, so it follows that
\[ [\mathscr{E}_{d,\rho}:\mathscr{C}_{d,\rho}] = \frac{ [\mathscr{O}_{d,\rho}^{\times}:1] \cdot \left| \left( \Cl \otimes \mcal{O} \right)_{\rho} \right|}{ [\mathscr{E}_{\rho}:\mathscr{E}_{d,\rho}]} = \left| \left( \Cl_d \otimes \mcal{O} \right)_{\rho} \right|.\]
This proves \Cref{main}.

\section{Gauss Sums and Stickelberger's Theorem for Ray Class Groups}

We now relax the condition on $p$ so that the results in this section are applicable to situations when $p \mid [k:\mbb{Q}]$. The main goal is to prove \Cref{rayrubin}, a ray class version of a theorem of Rubin \cite[Theorem 1.3]{Rubin} (specialized to the case when the base field is $\mbb{Q}$) which itself generalized a theorem of Thaine \cite[Theorem 6]{Thaine}.

The following lemma will act as an explicit version of Hilbert's Theorem 90 for our purposes.
\begin{lemma} \label{explicith90}
Let $\ell$ be a rational prime completely split in $k$. For any $\epsilon \in \mfrak{o}_{k(\zeta_{\ell})}^{\times}$ such that $N^{k(\zeta_{\ell})}_k(\epsilon) = 1$, we have that the element
\[ \alpha(\ell,\epsilon)=\alpha := - \sum_{a=1}^{\ell-1} \zeta_{\ell}^{\tau^a} \epsilon^{1+\tau+\cdots + \tau^{a-1}}= - \zeta_{\ell}^{\tau}\epsilon - \zeta_{\ell}^{\tau^2} \epsilon^{1+\tau} - \cdots - \zeta_{\ell}^{\tau^{\ell-1}} \epsilon^{1 + \tau + \cdots + \tau^{\ell-2}} \]
is non-zero for some choice of $\zeta_{\ell}$, moreover, $\alpha^{1-\tau} = \epsilon$ where $\langle \tau \rangle = \Gal(k(\zeta_{\ell})/k)$.
\end{lemma}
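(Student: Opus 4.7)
The plan is to verify the algebraic identity $\alpha^{1-\tau}=\epsilon$ by a direct computation exploiting $N^{k(\zeta_\ell)}_k(\epsilon)=1$, and then to establish non-vanishing for some choice of primitive $\ell$-th root of unity via a Vandermonde-type determinant. Since $\ell$ is completely split in $k$, the extension $k(\zeta_\ell)/k$ is cyclic of degree $\ell-1$ and $\tau$ acts on $\zeta_\ell$ as $\zeta_\ell\mapsto \zeta_\ell^t$ for some integer $t$ primitive modulo $\ell$.

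For the identity, introduce the shorthand $\eta_i := \epsilon^{1+\tau+\cdots+\tau^{i-1}}$ for $i\ge 1$, with $\eta_0 := 1$, so that $\alpha = -\sum_{i=0}^{\ell-2}\zeta_\ell^{\tau^i}\eta_i$. Two features drive the calculation: the recursion $\eta_i^\tau = \eta_{i+1}/\epsilon$ and the norm relation $\eta_{\ell-1} = N^{k(\zeta_\ell)}_k(\epsilon) = 1$. Applying $\tau$ and reindexing $j = i+1$ yields
\[
\alpha^\tau = -\frac{1}{\epsilon}\sum_{j=1}^{\ell-1}\zeta_\ell^{\tau^j}\eta_j.
\]
Splitting off the boundary contribution at $j=\ell-1$, which equals $\zeta_\ell\cdot 1$, and compensating with the missing $j=0$ term recovers $-\alpha$ inside the sum. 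Hence $\alpha^\tau=\alpha/\epsilon$, so $\alpha^{1-\tau}=\epsilon$ independently of which primitive $\ell$-th root was selected.

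The main obstacle is non-vanishing, which I will handle by contradiction. Let $\alpha_a$ denote the element obtained by replacing $\zeta_\ell$ with $\zeta_\ell^a$ throughout. If $\alpha_a = 0$ for every $a\in(\mbb{Z}/\ell)^\times$, then the vector $(\eta_0,\eta_1,\ldots,\eta_{\ell-2})$ lies in the kernel of the $(\ell-1)\times(\ell-1)$ matrix with $(a,i)$-entry $(\zeta_\ell^a)^{t^i}$. Because $t$ is primitive modulo $\ell$, the exponents $\{t^i:0\le i\le \ell-2\}$ are a permutation of $(\mbb{Z}/\ell)^\times$, so after permuting columns and extracting one factor of $\zeta_\ell^a$ from each row (whose product is a root of unity), the matrix becomes a classical Vandermonde in the distinct values $\zeta_\ell^1,\ldots,\zeta_\ell^{\ell-1}$, whose determinant $\prod_{1\le a<a'\le \ell-1}(\zeta_\ell^{a'}-\zeta_\ell^a)$ is non-zero. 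This forces every $\eta_i = 0$, contradicting $\eta_0 = 1$; hence $\alpha\ne 0$ for some choice of $\zeta_\ell$, and the identity from the previous paragraph applies to that choice.
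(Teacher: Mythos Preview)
Your proof is correct. The verification of the identity $\alpha^{1-\tau}=\epsilon$ is essentially the same computation as in the paper, just phrased through the auxiliary products $\eta_i$.

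For non-vanishing you take a genuinely different route. The paper packages the candidate $\alpha$'s (one for each choice of primitive $\ell$-th root) as the coefficients of the power series expansion of the rational function
\[
\alpha(x)=-\sum_{i=0}^{\ell-2}\frac{\zeta_\ell^{\tau^i}}{1-x\,\zeta_\ell^{\tau^i}}\,\eta_i,
\]
and argues that since $\eta_0=1$ gives a genuine pole, the function is not identically zero, hence some coefficient survives. Your argument instead sets up the simultaneous vanishing of all $\alpha_a$ as a linear system and observes that the coefficient matrix $(\zeta_\ell^{a t^i})$ is, after the column permutation $i\mapsto t^i$ and extraction of a row factor, a Vandermonde in the distinct nodes $\zeta_\ell,\zeta_\ell^2,\ldots,\zeta_\ell^{\ell-1}$. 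Both arguments ultimately rest on the linear independence of the distinct characters $a\mapsto\zeta_\ell^{aj}$, so they are close cousins; your Vandermonde approach is the more elementary and self-contained of the two, while the paper's generating-function viewpoint yields, as a small bonus, that at least two choices of $\zeta_\ell$ give $\alpha\neq 0$ (because $\alpha(x)$ has no pole at $x=1$). One point worth stating explicitly in your write-up is that the $\eta_i$ depend only on $\epsilon$ and the fixed automorphism $\tau$, not on the choice of primitive root, so that ``replacing $\zeta_\ell$ by $\zeta_\ell^a$ throughout'' really does leave the vector $(\eta_0,\ldots,\eta_{\ell-2})$ unchanged; this is what makes the linear-system formulation legitimate.
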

\begin{proof}
Let $\alpha(x) \in \mbb{C}(x)$ be the rational function defined by
\[ x \mapsto -\sum_{a=1}^{\ell-1} \frac{\zeta_{\ell}^{\tau^a}}{1-x\zeta_{\ell}^{\tau^a}} \cdot \epsilon^{1+\tau+\cdots +\tau^{a-1}}.\]
Since $\alpha(x)$ has distinct poles, it follows that $\alpha(x)$ is not identically zero. On the other hand, we may view $\alpha(x) \in \mbb{C}\llbracket x \rrbracket$ and write
\[ \alpha(x) = \sum_{n=0}^{\infty} \big(-\zeta_{\ell}^{(n+1)\tau} \epsilon - \zeta_{\ell}^{(n+1)\tau^2} \epsilon^{1+\tau} - \cdots - \zeta_{\ell}^{(n+1)\tau^{\ell - 1}} \epsilon^{1 + \tau + \cdots + \tau^{\ell-2}} \big) x^n.\]
Note that the power series form of $\alpha(x)$ has periodic coefficients of the form of the claim. Since $\alpha(x)$ is not identically zero, we get that $\alpha \neq 0$ for some choice of $\zeta_{\ell}$. In fact, $\alpha \neq 0$ for at least two choices of $\zeta_{\ell}$ for if otherwise, then $\alpha(x)$ has a pole at $x=1$, a contradiction. This proves the first claim.

Now, notice that
\begin{align*}
\epsilon \alpha^{\tau} &= -\zeta_{\ell}^{\tau^2} \epsilon^{1+\tau} - \zeta_{\ell}^{\tau^3} \epsilon^{1+\tau+\tau^2} - \cdots - \zeta_{\ell}^{\tau^{\ell}} \epsilon^{1+\tau + \cdots + \tau^{\ell-1}} \\
& = - \zeta_{\ell}^{\tau^2} \epsilon^{1+\tau} - \zeta_{\ell}^{\tau^3} \epsilon^{1+\tau+\tau^2} - \cdots - \zeta_{\ell}^{\tau}\epsilon \\
&= \alpha,
\end{align*}
since $\tau^{\ell} = \tau$ and $1 + \tau + \cdots + \tau^{\ell-1} = 1$. This proves the lemma (alternatively see \cite{All3}).
\end{proof}

Fix an ideal $\mfrak{a} \subseteq \mfrak{o}$. For odd primes $\ell$ that are completely split in $k$, let $k(\zeta_{\ell})^{\times}(\mfrak{a})$ denote the set of all elements of $k(\zeta_{\ell})^{\times}$ that are co-prime to $\mfrak{a}$ and define
\begin{align*}
N_{\ell,\mfrak{a}} : k(\zeta_{\ell})^{\times}(\mfrak{a}) &\to \big( \bigslant{\mfrak{o}}{\mfrak{a}} \big)^{\times} \\
x & \mapsto N^{k(\zeta_{\ell})}_k(x) \bmod{\mfrak{a}}.
\end{align*}
Now set
\[ E(\ell,\mfrak{a}) := \left\{ \epsilon \in E_{k(\zeta_{\ell})} : N^{k(\zeta_{\ell})}_k(\epsilon) = 1,\   N_{\ell,\mfrak{a}} \big( \alpha(\ell,\epsilon) \big) \in \im\left( \bigslant{E}{E_{\mfrak{a}}} \to \left( \bigslant{\mfrak{o}}{\mfrak{a}} \right)^{\times} \right) \right\}. \]
The following theorem is a generalization of \cite[Theorem 5.1]{Rubin}.

\begin{thm}\label{thaine}
Let $n \in \mathbb{N}$ and let $\ell$ be an odd prime split completely in $k$ such that $\ell \equiv 1 \mod{n}$ and $(\ell)$ is co-prime to $\mfrak{a}$. Fix a prime $\lambda$ of $k$ above $\ell$, and let $\mcal{A} \subseteq (\mbb{Z}/n\mbb{Z})[G]$ be the annihilator of the cokernel of the natural map
\[ \phi: E(\ell, \mfrak{a}) \to \left( \bigslant{\mfrak{o}_{k(\zeta_{\ell})}}{L} \right)^{\times} \otimes \bigslant{\mbb{Z}}{n\mbb{Z}}, \]
where $L$ is the product of all primes of $\mfrak{o}_{k(\zeta_{\ell})}$ above $\ell$. Then $\mcal{A}$ annihilates the class of $\lambda$ in $\Cl(\mfrak{a})/n\Cl(\mfrak{a})$.
\end{thm}
\begin{proof}
Let $\theta \in \mcal{A}$, and let $u \in \mfrak{o}_{k(\zeta_{\ell})}$ such that
\[ u \equiv s^{-1} \bmod{\mcal{L}} \qquad \text{and} \qquad u \equiv 1 \bmod{\mcal{L}^{\sigma}} \quad \text{ for all } \sigma \neq \id,\]
where $\mcal{L}$ is the prime of $\mfrak{o}_{k(\zeta_{\ell})}$ above $\lambda$ and $\langle s \rangle = \mbb{Z}/\ell\mbb{Z}^{\times}$. The element $u$ has been chosen so that
\[ \left( \mfrak{o}_{k(\zeta_{\ell})} / L \right)^{\times} = \langle u \bmod{L} \rangle_{\mbb{Z}/(\ell-1)\mbb{Z} [G]}.\]
Now, $u^{\theta} \equiv \eta^n \epsilon \bmod{L}$ for some $\eta \in k(\zeta_{\ell})^{\times}$ coprime to $\ell$ and $\epsilon \in E(\ell, \mfrak{a})$. Let $\tau$ be a generator for $\Gal(k(\zeta_{\ell})/k)$, and $\alpha = \alpha(\ell,\epsilon)$ be as in \Cref{explicith90}.
Now, $(\alpha)$ is a non-zero ideal inert under the action imposed by $\Gal(k(\zeta_{\ell})/k)$. It follows that there exists an ideal $\mfrak{b}\subseteq \mfrak{o}_k$ satisfying
\[ (\alpha) = \mfrak{b} \cdot \prod_{\sigma \in G} \mcal{L}^{a_{\sigma} \sigma^{-1}}, \]
where no conjugate of $\mcal{L}$ is supported by $\mfrak{b}$. Taking norms of both sides of the above we get
\[ \left( N^{k(\zeta_{\ell})}_k (\alpha) \right) = \mfrak{b}^{\ell -1} \cdot \lambda^{\sum a_{\sigma} \sigma^{-1}}. \]
Since $N_{\ell,\mfrak{a}} \big( \alpha \big) \in \im\left( \bigslant{E}{E_{\mfrak{a}}} \to \left( \bigslant{\mfrak{o}}{\mfrak{a}} \right)^{\times} \right)$, we have that $\big( N^{k(\zeta_{\ell})}_k (\alpha) \big) \in P_{\mfrak{a}}$. By assumption we have $n \mid (\ell -1)$, so $\sum a_{\sigma} \sigma^{-1} \bmod{n \mathbb{Z}[G]}$ annihilates the class of $\lambda$ in $\Cl(\mfrak{a})/n \Cl(\mfrak{a})$.

It remains to relate the coefficients $a_{\sigma}$ to $\theta$. To that end, note that
\[ a_{\sigma} = \ord_{\mcal{L}^{\sigma^{-1}}} (\alpha) = \ord_{\mcal{L}^{\sigma^{-1}}} (1-\zeta_{\ell})^{a_{\sigma}}. \]
Write $\alpha = \beta(1-\zeta_{\ell})^{a_{\sigma}}$ where $\beta$ is a $\mcal{L}^{\sigma^{-1}}$-unit. Without loss of generality, let's suppose $\tau: \zeta_{\ell} \to \zeta_{\ell}^s$. The primes above $\ell$ are totally ramified in $k(\zeta_{\ell})/k$. So $\tau$ acts trivially on $\mcal{L}^{\sigma^{-1}}$-units modulo $\mcal{L}^{\sigma^{-1}}$. Hence
\begin{align*}
\epsilon = \frac{\alpha}{\alpha^{\tau}} &= \frac{\beta(1-\zeta_{\ell})^{a_{\sigma}}}{\beta^{\tau}(1-\zeta_{\ell}^{\tau})^{a_{\sigma}}} \\
&\equiv \left( \frac{1-\zeta_{\ell}}{1-\zeta_{\ell}^{\tau}} \right)^{a_{\sigma}} \bmod{\mcal{L}^{\sigma^{-1}}} \\
&\equiv (s^{-1})^{a_{\sigma}} \bmod{\mcal{L}^{\sigma^{-1}}}.
\end{align*}
This gives us that $\epsilon \equiv u^{a_{\sigma} \sigma^{-1}} \bmod{\mcal{L}^{\sigma^{-1}}}$, so
\[ \epsilon \equiv u^{\sum a_{\sigma} \sigma^{-1}} \equiv \eta^{-n} u^{\theta} \mod{L}. \]
Hence $\sum a_{\sigma} \sigma^{-1} \equiv \theta \bmod{n \mbb{Z}[G]}$.
\end{proof}

\begin{remark} If in \Cref{explicith90} we take $k = \mbb{Q}(\zeta_m)$ and $\ell \equiv 1 \bmod{m}$ with $\epsilon = \zeta_m$, then $\alpha$ is the classical Gauss sum. In this case, we have that
\[ \big( \alpha^{\ell-1} \big) = \lambda^{\sum a_{\sigma} \sigma^{-1}} \]
where, similar to \Cref{thaine}, we have $\zeta_m \equiv u^{a_{\sigma} \sigma^{-1}} \bmod \lambda^{\sigma^{-1}}$. The dependence of the coefficients $a_{\sigma}$ on $\ell$ is easy to tease out of this congruence, and we're a hop, skip, and a jump away from the classical Stickelberger theorem. For the more general types of elements $\alpha$ in \Cref{explicith90} and \Cref{thaine}, the dependence of the $a_{\sigma}$ on $\ell$ is more difficult to separate. Instead of reckoning with this obstacle, we step around it and show that any $G$-module map from $E/E^{p^n}$ to $\mbb{Z}/p^n\mbb{Z}[G]$ can be effectively filtered through $\big( \mfrak{o}_{k(\zeta_{\ell})}/L \big)^{\times} \otimes \mbb{Z}/p^n\mbb{Z}$ for certain well chosen primes $\ell$. This idea was first employed by Rubin in \cite{Rubin}.
\end{remark}

\Cref{thaine} inspires us to make the following definition.
\begin{defin} \label{aspecial}
For an ideal $\mfrak{a} \subseteq \mfrak{o}$, let $\mcal{T}(\mfrak{a})$ denote the set of numbers $\varsigma \in k^{\times}$ such that for all but finitely many primes $\ell$ split completely in $k$, we have that there is an $\epsilon \in E(\ell,\mfrak{a})$ such that for all $\sigma \in G$,
\[ \epsilon \equiv \varsigma \mod \mcal{L}^{\sigma} \]
where $\mcal{L} \subset \mfrak{o}_{k(\zeta_{\ell})}$ is a prime ideal such that $\mcal{L} \mid \ell$. We call $\mcal{T}(\mfrak{a})$ the $\mfrak{a}$-special numbers of $k$. Let
\[ \mcal{S}(\mfrak{a}) := \mcal{T}(\mfrak{a}) \cap E.\]
We call $\mcal{S}(\mfrak{a})$ the $\mfrak{a}$-special units of $k$.
\end{defin}

The $1$-special numbers are, in fact, Rubin's special numbers from \cite{Rubin}. It's fair to ask if $\mfrak{a}$-special numbers even exist. For an appropriate choice of $d$, the following theorem will show that Schmidt's $d$-cyclotomic units are contained in the $\mfrak{a}$-special units. So $\mcal{S}(\mfrak{a})$ is a subgroup of finite index of $E$.

\begin{thm} \label{explicitspecial}
If $\delta \in D\big(d\big)$, then $\pm \delta \in \mcal{T}(d)$, i.e., $\pm D\big(d\big) \subseteq \mcal{T}(d)$.
\end{thm}
\begin{proof}
It suffices to show that $\pm \delta_{n,d} \in \mcal{T}(d)$ for all $n>1$ and $n \nmid d$ since these numbers generate $D(d)$. Let $\ell$ be a rational prime split completely in $k$ such that $(\ell,nd) =1$. Define
\[ \pm \epsilon_{n,d} = N^{\mbb{Q}(\zeta_{n\ell})}_{k^n(\zeta_{\ell})} \prod_{t \mid \overline{d}} (\zeta_{\ell}^t - \zeta_n^t)^{\mu(t) d/t} \in k(\zeta_{\ell}). \]
Let $\lambda$ be a prime of $k$ above $\ell$ and $\mcal{L}$ the prime of $k(\zeta_{\ell})$ above $\lambda$. Since
\[ (1-\zeta_{\ell}) \mfrak{o}_{k(\zeta_{\ell})} = \prod_{\sigma \in G} \mcal{L}^{\sigma}, \]
it follows that $\zeta_{\ell} \equiv 1 \bmod{\mcal{L}^{\sigma}}$ for all $\sigma \in G$, hence $\pm \epsilon_{n,d} \equiv \pm \delta_{n,d} \bmod{\mcal{L}^{\sigma}}$ for all $\sigma \in G$. Now, we note
\begin{align*}
N^{k^n(\zeta_{\ell})}_{k^n} (\pm \epsilon_{n,d}) &= N^{\mbb{Q}(\zeta_n)}_{k^n} N^{\mbb{Q}(\zeta_{n\ell})}_{\mbb{Q}(\zeta_n)} \prod_{t \mid \overline{d}} (\zeta_{\ell}^t - \zeta_n^t)^{\mu(t) d/t} \\
&= N^{\mbb{Q}(\zeta_n)}_{k^n} \prod_{t \mid \overline{d}} \left( \frac{ \zeta_n^{t\ell} - 1}{ \zeta_n^t - 1} \right)^{\mu(t) d/t} \\
&= \delta_{n,d}^{\sigma_{\ell} - 1},
\end{align*}
where $\sigma_{\ell}$ is the Frobenius automorphism for $\ell$ in $k$. Since $\ell$ splits completely in $k$, it follows that $\sigma_{\ell} =1$ hence $N^{k_n(\zeta_{\ell})}_{k^n} (\epsilon_{n,d}) = N^{k(\zeta_{\ell})}_k (\epsilon_{n,d}) = 1$.

Now, let $q \mid d$ be a prime, let $q^j$ be the $q$-primary part of $d$, and let $d_q = d/q^j$. Then
\[ \epsilon_{n,d} = N^{\mathbb{Q}(\zeta_{n\ell})}_{k^n(\zeta_{\ell})} \prod_{t \mid \frac{\overline{d}}{q}} \left[ \frac{(\zeta_{\ell}^t - \zeta_n^t)^q}{(\zeta_{\ell}^{tq} - \zeta_n^{tq})} \right]^{q^{j-1} \mu(t) d_q/t}.\]
For all $t \mid \overline{d}/q$ we have that $\zeta_{\ell}^t$ and $\zeta_{\ell}^{tq}$ are primitive $\ell$-th roots of unity since $(\ell,nd)=1$, moreover, $\zeta_n^t$ and $\zeta_n^{tq}$ are not equal to $1$ since $n \nmid \overline{d}$. It follows that  $(\zeta_{\ell}^t - \zeta_n^t)$ and $(\zeta_{\ell}^{tq} - \zeta_n^{tq})$ are units in $\mbb{Z}[\zeta_{n\ell}]$, hence $\epsilon_{n,d} \in \mfrak{o}_{k(\zeta_{\ell})}^{\times}$. We also have
\[ \frac{(\zeta_{\ell}^t - \zeta_n^t)^q}{\zeta_{\ell}^{tq} - \zeta_n^{tq}} \equiv 1 \bmod{q}\]
from which it follows that
\[ \left[ \frac{(\zeta_{\ell}^t - \zeta_n^t)^q}{\zeta_{\ell}^{tq} - \zeta_n^{tq}} \right]^{q^{j-1}} \equiv 1 \bmod{q^j}, \]
whence $\epsilon_{n,d} \equiv 1 \bmod{q^j}$. Since $q$ was an arbitrary divisor of $d$, it follows that $\epsilon_{n,d} \equiv 1 \bmod{d}$, hence $\pm \epsilon_{n,d} \equiv \pm 1 \bmod{d}$.

Now, let $N_b = 1 + \tau + \cdots + \tau^{b-1}$. Note that
\begin{align*}
\alpha(\ell,\pm \epsilon_{n,d}) &= \mp \sum_{b=1}^{\ell -1} \zeta_{\ell}^{\tau^b} \epsilon_{n,d}^{N_b} \\
&\equiv \mp \sum_{b=1}^{\ell-1} \zeta_{\ell}^{\tau^b} \bmod{d} \\
&\equiv \mp 1 \bmod{d}.
\end{align*}
So $N^{k(\zeta_{\ell})}_k \big( \alpha(\ell_,\pm \epsilon_{n,d}) \big) \equiv 1 \bmod{d}$. This proves that $\pm \epsilon_{n,d} \in E(\ell,d)$ as claimed.
\end{proof}

Now, let
\begin{align*}
A_{n}(\mfrak{a}) &= \Cl(\mfrak{a})/p^n \Cl(\mfrak{a}) \\
A(\mfrak{a}) &= \Syl_p(\Cl(\mfrak{a})) \\
F_{n}(\mfrak{a}) &= \text{the ray class field over $k$ associate to $A_{n}(\mfrak{a})$. } \\
F(\mfrak{a}) &= \text{the ray class field over $k$ associate to $A(\mfrak{a})$.}
\end{align*}
Note that
\[ \Gal(F_n(\mfrak{a})/k) \simeq A_n(\mfrak{a}) \quad \text{and} \quad \Gal(F(\mfrak{a})/k) \simeq A(\mfrak{a}) \]
via the Artin map. Set $A_n'(\mfrak{a}) \leq A_n(\mfrak{a})$ such that $A_n'(\mfrak{a}) \simeq \Gal(F_n(\mfrak{a})/(F_n(\mfrak{a}) \cap k(\zeta_{p^n})))$.

\begin{thm} \label{mainthm}
Let $\alpha: E/E^{p^n} \to \mbb{Z}/p^n\mbb{Z}[G]$ be a $G$-module map. Then
\[ \alpha \left( \mcal{S}(\mfrak{a}) E^{p^n}/E^{p^n} \right) \quad \text{annihilates} \quad A_n'(\mfrak{a}). \]
\end{thm}
\begin{proof}
The argument here is essentially the same as in \cite{Rubin} (with base field $\mathbb{Q}$) but with some natural adjustments, so we give a somewhat abbreviated version of the proof. Let $\mcal{G} = \Gal \big( k(\zeta_{p^n})/\mbb{Q}\big)$ and let
\[ \Gamma = \Gal \left( \bigslant{ k(\zeta_{p^n}, E^{1/p^n})}{ k( \zeta_{p^n}, (\ker \alpha)^{1/p^n})} \right).\]
Let $\mcal{G}$ act on $\Gamma$ by conjugation, and let $\gamma_1,\ldots, \gamma_j$ be a complete system of unique representatives of $\Gamma/\mcal{G}$. Note that $F_n(\mfrak{a})$ and $k(\zeta_{p^n},E^{1/p^n})$ are linearly disjoint over $F_n(\mfrak{a}) \cap k(\zeta_{p^n})$. This is because Kummer theory gives us an isomorphism of $\Gal(k(\zeta_{p^n})/k)$-modules
\[ \Gal \left( \bigslant{k(\zeta_{p^n}) (F_n(\mfrak{a}) \cap k(\zeta_{p^n}, E^{1/p^n}))}{ k(\zeta_{p^n}) } \right) \simeq \Hom_{\mathbb{Z}}(B,\mu_{p^n}), \]
where $B \leq E/E^{p^n}$ such that $k(\zeta_{p^n},E^{1/p^n}) = k(B^{1/p^n})$ and $\mu_{p^n}$ is the group of $p^n$-th roots of unity. The Galois group on the left is abelian, so it follows that $\Gal(k(\zeta_{p^n})/k)$ acts trivially on $\Hom_{\mathbb{Z}}(B,\mu_{p^n})$. Since $k$ is real and $p$ is an odd prime, it must be that $\Hom_{\mathbb{Z}}(B,\mu_{p^n})$ is trivial whence the claim that $F_n(\mfrak{a})$ and $k(\zeta_{p^n},E^{1/p^n})$ are linearly disjoint over $F_n(\mfrak{a}) \cap k(\zeta_{p^n})$.

Now, let $\mfrak{c} \in A_n'(\mfrak{a})$. We may choose $\beta_i \in \Gal\big( F_n(\mfrak{a}) k(\zeta_{p^n}, E^{1/p^n})/k \big)$ such that
\[ \beta_i|_{F_n(\mfrak{a})} = \mfrak{c} \qquad \text{and} \qquad \beta_i|_{k(\zeta_{p^n}, E^{1/p^n})} = \gamma_i.\]
By the Chebotarev Density Theorem, there exists infinitely many degree $1$ non-conjugate $j$-tuples of primes $\lambda_1,\ldots, \lambda_j \subseteq \mfrak{o}_k$ such that $(\lambda_i,\mfrak{a}) =1$ and $\beta_i$ is in the conjugacy class of Frobenius automorphisms for $\lambda_i$ in $\Gal\big( F_n(\mfrak{a})k(\zeta_{p^n},E^{1/p^n})/k \big)$. It follows that $\lambda_i \in \mfrak{c}$. We let $\ell_i$ be the rational prime below $\lambda_i$. Since $\beta_i|_{k(\zeta_{p^n})} = \id$, it follows that $\ell_i \equiv 1 \bmod{p^n}$.

Now, let $\phi$ be the natural map from $E/E^{p^n} \to (\mfrak{o/L})^{\times} \otimes \mbb{Z}/p^n\mbb{Z}$ where $\mfrak{L} = \prod_{i=1}^j \ell_i$. From the exact sequence of $\Gal(k(\zeta_{p^n})/k)$-modules
\[ 1 \to \mu_{p^n} \to k(\zeta_{p^n})^{\times} \xrightarrow{p^n} k(\zeta_{p^n})^{\times p^n} \to 1 \]
we obtain the exact sequence of $\Gal(k(\zeta_{p^n})/k)$-invariants
\[ 1 \to k^{\times} \xrightarrow{p^n} k(\zeta_{p^n})^{\times p^n} \cap k \to 1.\]
So $[k(\zeta_{p^n})^{\times p^n} \cap k : k^{\times p^n}] =1$. So we have
\begin{align*}
\epsilon \in \ker \alpha \quad &\text{iff $\epsilon^{1/p^n} \in k(\zeta_{p^n}, (\ker \alpha)^{1/p^n})$}, \\
&\text{iff $g \gamma_i$ fixes $k(\zeta_{p^n},\epsilon^{1/p^n})$ for all $g \in \mcal{G}$, $\gamma_i \in \Gamma/\mathcal{G}$}, \\
&\text{iff $\lambda_i^{\sigma}$ splits completely in $k(\epsilon^{1/p^n})$ for all $\sigma \in G$, $i=1,\ldots,j$},\\
&\text{iff $\epsilon \in \ker \phi$}.
\end{align*}
In short, since $\Gamma$ is saturated with Frobenius automorphisms for the $\lambda_i$, it follows that $\epsilon \in \ker \alpha$ if and only if $\epsilon \in \ker \phi$. This allows us to consider the well-defined map
\[ \alpha \circ \phi^{-1}: \im(\phi) \to \mbb{Z}/p^n\mbb{Z}[G]\]
which we may lift to a map $f: (\mfrak{o}/\mfrak{L})^{\times} \otimes \mbb{Z}/p^n\mbb{Z} \to (\mbb{Z}/p^n\mbb{Z})[G]$ obtaining the following commutative diagram:
\begin{center}
\begin{tikzpicture}[scale=.75]
\node (E) at (0,2) {$E/E^{p^n}$};
\node (O) at (3,2) {$(\mbb{Z}/p^n\mbb{Z})[G]$};
\node (T) at (0,0) {$ (\mfrak{o}/\mfrak{L} )^{\times} \otimes \mbb{Z}/p^n \mbb{Z}$};

\draw [->] (E) to node [left] {$\phi$} (T);
\draw [->] (T) to node [right,below] {$f$} (O);
\draw [->] (E) to node [above] {$\alpha$} (O);
\end{tikzpicture}
\end{center}
Now, let $\varsigma_{\mfrak{a}} \in \mcal{S}(\mfrak{a})$. Without loss of generality, we may assume that for each $i$, there exists $\epsilon_i \in E(\ell_i, \mfrak{a})$ such that
\[ \epsilon_i \equiv \varsigma_{\mfrak{a}} \bmod{\mcal{L}_i^{\sigma}} \quad \text{for all } \sigma \in G,\]
where $\mcal{L}_i \subset \mfrak{o}_{k(\zeta_{\ell_i})}$ is the prime above $\lambda_i$. Set
\[ L_i := \prod_{\sigma \in G} \mcal{L}_i^{\sigma}. \]
Since the primes of $\mfrak{o}$ above $\ell_i$ are totally ramified in $k(\zeta_{\ell_i})$, we have that
\[ \left( \mfrak{o}/\mfrak{L} \right)^{\times} \simeq  \prod_{i=1}^j (\mfrak{o}/\ell_i)^{\times} \simeq \prod_{i=1}^j \left( \mfrak{o}_{k(\zeta_{\ell_i})}/L_i \right)^{\times}. \]
This association allows us to consider $\phi$ and $f$ as functions defined into and on
\[ \prod \left( \mfrak{o}_{k(\zeta_{\ell_i})}/L_i \right)^{\times} \otimes \mbb{Z}/p^n\mbb{Z},\] respectively. Let $u_i \in \mfrak{o}_{k(\zeta_{\ell_i})}$ such that
\[ u_i \equiv s_i^{-1} \bmod{\mcal{L}_i} \qquad \text{and} \qquad u_i \equiv 1 \bmod{\mcal{L}_i^{\sigma}} \quad \text{ for all } \sigma \in G, \sigma \neq \id, \]
where $\langle s_i \rangle = \bigslant{\mbb{Z}}{\ell_i\mbb{Z}}^{\times}$, as in \Cref{thaine}. Note that
\[ \langle u_i \bmod{L_i} \rangle_{(\mbb{Z}/(\ell_i-1)\mbb{Z})[G]} \simeq \left( \mfrak{o}_{k(\zeta_{\ell_i})}/L_i \right)^{\times}, \]
Let $\theta_i \in (\mbb{Z}/p^n\mbb{Z})[G]$ such that
\[ \mfrak{s}_{\mfrak{a}} = \mfrak{u}_i^{\theta_i}, \quad \text{where} \quad \begin{cases} \mfrak{s}_{\mfrak{a}} = \big( \varsigma_{\mfrak{a}} \bmod{L_i} \big) \otimes 1 \\
\mfrak{u}_i = \big( u_i \bmod{L}_i \big) \otimes 1 \end{cases} \in \left( \mfrak{o}_{k(\zeta_{\ell_i})}/L_i \right)^{\times} \otimes \bigslant{\mbb{Z}}{p^n \mbb{Z}}\]
Since $\varsigma_{\mfrak{a}} \bmod{L_i} \equiv \epsilon_i \bmod{L}_i$, it follows that $\theta_i$ is an annihilator of the cokernel of the map
\[ E(\ell_i, \mfrak{a}) \to \left( \mfrak{o}_{k(\zeta_{\ell_i})}/L_i \right)^{\times} \otimes \bigslant{\mbb{Z}}{p^n \mbb{Z}}. \]
So $\theta_i$ annihilates the class of $[\lambda_i]=\mfrak{c}$ in $\Cl(\mfrak{a})/p^n \Cl(\mfrak{a})$ by \Cref{thaine}. Let $\overline{\varsigma_{\mfrak{a}}} = \varsigma_{\mfrak{a}} \bmod{E^{p^n}}$. Then
\[ \alpha(\overline{\varsigma_{\mfrak{a}}}) = f(\mfrak{s}_{\mfrak{a}})= \sum_{i=1}^j \theta_i \cdot f(\mfrak{u}_i),\]
and we get
\[ \mfrak{c}^{\alpha(\overline{\varsigma_{\mfrak{a}}})} = \prod_{i=1}^j ( \mfrak{c}^{\theta_i} )^{f (\mfrak{u}_i) } = \prod_{i=1}^j 1^{f(\mfrak{u}_i)} = 1.\]
This completes the proof of the theorem.
\end{proof}
We may now give a proof of \Cref{rayrubin}
\begin{proof}[Proof of \Cref{rayrubin}]
Let $\omega_1,\omega_2,\ldots, \omega_t$ be a $\mbb{Z}_p$-basis for $\mcal{O}$. For $i=1,\ldots,t$ define $\alpha_i: E \to \mbb{Z}_p[G]$ by
\[ \alpha(\epsilon) = \sum_{i=1}^t \alpha_i(\epsilon) \omega_i.\]
Each $\alpha_i$ is a $G$-module map, and since $\Syl_p(\mfrak{C}_{\mfrak{a}}) \leq A_n'(\mfrak{a})$, the corollary now follows from \Cref{mainthm}.
\end{proof}
We now prove \Cref{raysolomon}.
\begin{proof}[Proof of \Cref{raysolomon}] Let $\vartheta$ be the $G$-module map defined in the previous section. For every $\alpha \in \Hom_G(E,\mcal{O}[G])$, there exists $\beta \in K[G]$ such that $\alpha = \beta \vartheta$ by \cite[Theorem 5.1]{All1}. So for every $\beta \in K[G]$ such that $\beta \vartheta(E) \subseteq \mcal{O}[G]$, $\beta \vartheta(\mcal{S}(\mfrak{a}))$ annihilates $\mfrak{C}_{\mfrak{a}} \otimes \mcal{O}$. If the ramification index of $p$ in $k$ is $e(p) = p^j b$ where $(p,b)=1$, then we could take
\[ \beta =\varpi^{j e(p) - p^j} = \frac{|e(p)|_p^{-1}}{\varpi^{|e(p)|_p^{-1}}},\]
(see \cite[Lemma 2.3]{All1}). In particular, we get the explicit annihilation result: for every $\varsigma_{\mfrak{a}} \in \mcal{S}(\mfrak{a})$ (perhaps a $d$-cyclotomic unit for an appropriate $d$), we have
\[ \frac{|e(p)|_p^{-1}}{\varpi^{|e(p)|_p^{-1}}} \sum_{\sigma \in G} \log_p\big( \varsigma_{\mfrak{a}}^{\sigma} \big) \sigma^{-1} \in \mcal{O}[G] \]
annihilates $\mfrak{C}_{\mfrak{a}} \otimes \mcal{O}$.
\end{proof}

\begin{remark} The above theorem confirms a suspicion of D. Solomon regarding a stronger annihilation result lying beyond his \cite[Conjecture 4.1]{Solomon} (see \cite[Remark 4.1]{Solomon}). In particular, let $U$ denote the maximal abelian pro-p-extension of $k$. The elements of \Cref{raysolomon} are explicit annihilators of explicit quotients of $\Gal(U/k) \otimes \mcal{O}$.
\end{remark}

\begin{remark} The group $\Syl_p\big( E/\mcal{S}(\mfrak{a}) \big)$ is mysterious. In the case when $p \nmid |G|$, the order of $\Syl_p \big( E/\mcal{S}(d) \big)_{\rho}$ seems to be less than or equal to $\Syl_p(E_d/C_d)_{\rho}$. One wonders whether $\Syl_p\big( E/\mcal{S}(\mfrak{a}) \big)$ is encoding information more akin to the \emph{exponent} of $\Syl_p(\mfrak{C}_{\mfrak{a}})$.
\end{remark}

\end{document}